\newcommand{\comment}[1]{}
\numberwithin{equation}{section}
\newtheorem{remark}{Remark}[section]
\newtheorem{theorem}{Theorem}[section]
\newtheorem{lemma}{Lemma}[section]
\newtheorem{cor}{Corollary}[section]
\theoremstyle{definition}
\newtheorem{definition}{Definition}[section]
\DeclareMathOperator{\Tr}{Tr}
\newcommand{\beq}{\begin{eqnarray}}
\newcommand{\eeq}{\end{eqnarray}}
\newcommand{\ben}{\begin{eqnarray*}}
\newcommand{\een}{\end{eqnarray*}}
\begin{document}
\def\shorttitle{Covariance matrix}
 \def\shortauthors{A. Bose, P. Sen}
\title{\textbf{\Large \sc
\Large{Sample covariance matrices and special symmetric partitions} 
}}\small

\author{
 \parbox[t]{0.20
\textwidth}{{\sc Arup Bose}
 \thanks{Statistics and Mathematics Unit, Indian Statistical Institute, 203 B.T. Road, Kolkata 700108, India. email: bosearu@gmail.com. }}
\parbox[t]{0.25\textwidth}{{\sc Priyanka Sen}
 \thanks{Statistics and Mathematics  Unit, Indian Statistical Institute, 203 B.T. Road, Kolkata 700108, INDIA. email: priyankasen0702@gmail.com}}
}

\date{\today}  
\maketitle

\begin{abstract}
Suppose $X_p$ is a real $p \times n$ matrix with independent entries and consider the (unscaled) sample covariance matrix $S_p=X_pX_p^T$. The Mar$\check{\text{c}}$enko-Pastur law was discovered as the limit of the bulk distribution of the sample covariance matrix in 1967. There has been extensions of this result in several directions. In this paper we consider an extension that handles several of the existing ones as well as generates new results.

  We show that under suitable assumptions on the entries of $X_p$, the limiting spectral distribution exists in probability or almost surely. The moments are described by a set of partitions that are beyond pair partitions and non-crossing partitions and are also related to special symmetric partitions, which are known to appear in the limiting spectral distribution of Wigner-type matrices. Similar results hold for other patterned matrices such as reverse circulant, circulant, Toeplitz and Hankel matrices. 
\end{abstract}

\vskip 5pt
\noindent \textbf{Key words and phrases.} Empirical and limiting spectral distribution, Wigner matrix, Sample Covariance matrix,  Mar$\check{\text{c}}$enko-Pastur law, non-crossing partition, special symmetric partition, cumulant, free cumulant, exploding moments, hypergraphs, sparse matrices.  

\medskip

\section{Introduction}\label{introduction}
 Suppose $A_n$ is an $n\times n$ real symmetric random matrix with (real) eigenvalues $\lambda_1,\lambda_2,\ldots,\lambda_n$. Its \textit{empirical spectral distribution} or measure (ESD) is the random probability measure:
\begin{align*}
\mu_{A_n} = \frac{1}{n} \sum_{i=1}^{n}  \delta_{\lambda_i},
\end{align*}
where $\delta_x$ is the Dirac measure at $x$. The expected empirical spectral distribution (EESD) is the corresponding expected measure. 

The notions of convergence used for the ESD are the weak convergence of the EESD and the weak convergence of the ESD, either in probability or almost surely. The limits are identical when the latter two limits are non-random. In any case, any of these limits will be referred to as the \textit{limiting spectral distribution} (LSD) of $\{A_n\}$. 

Consider the $p \times n$ matrix $X_p$ with real independent entries $\{x_{ij,n}: 1\leq i \leq p, 1 \leq j \leq n\}$. We are interested in the LSD of the matrix $S_p=X_pX_p^{T}$ when $p=p(n), p/n \rightarrow y\in (0,\infty)$. This matrix will be called the \textit{Sample covariance matrix} (without scaling) and is denoted by $S$. It is usual to assume that the dimension $p$ tends to $\infty$ proportionally to the degrees of freedom $n$.

There have been several works regarding the LSD of these matrices.
The LSD of $\frac{1}{n}S_p$, where the entries of $X_p$ are iid with mean zero and fourth moment finite, was first established in \citep{marchenko1967distribution} and is called the Mar$\check{\text{c}}$enko-Pastur (M-P) law after them. Subsequent works followed in \citep{grenander1977spectral}, \citep{wachter1978strong}, \citep{yin1986limiting},  where the authors investigated the LSD under varied assumptions on the entries. For instance, in  \citep{wachter1978strong}, the author proves the M-P law under the assumption that the entries are independent with common mean and variance and $(2+\delta)$th moment finite.  \citep{belinschi2009spectral} proved the convergence of  the ESD of $\frac{1}{a_n^2}S_p$ when the entries of $X_p$ are iid with heavy tails and $a_n$ is a sequence of constants related to the tail probability of the entry distribution. An appropriate truncation of the variables at levels that depend on $n$ was crucial in their arguments. It thus becomes relevant to probe the case where the distribution of the entries is also allowed to depend on $n$. Such a model was considered by \citep{zakharevich2006generalization} for the Wigner matrix. She studied the LSD of a generalized Wigner matrix $W_n$ whose entries $\{x_{ij,n}; 1 \leq i \leq j \leq n\}$ are iid with distribution $G_n$ for every fixed $n$. Assume that, $$\displaystyle{\lim_{n\to\infty} \frac{\mu_n(k)}{n^{k/2-1}\mu_n(2)^{k/2}}}= g_k,  \ \text{say,  exists for all} \ \ k\geq 1$$ where $\mu_n(k)$ is the $k$th moment of $G_n$. Then she proved that the ESD of $\frac{W_n}{\sqrt{n\mu_n(2)}}$ converges in probability to a distribution $\mu_{zak}$ that depends only on the sequence $\{g_{2k}\}$. \citep{bose2021random} explored Wigner matrices with general independent triangular array of entries and found that a class of partitions, which they called special partitions, play a crucial role in the limiting moments. They also discovered that there is a one-to-one correspondence between the rooted trees given in \citep{zakharevich2006generalization} and the special symmetric partitions.

 In general such matrices are referred to as \textit{matrices with exploding moments} and have been considerd by several authors.  In particular sample covariance matrices with exploding moments have been studied in \citep{Benaych-Georges2012} (Theorem 3.2) and \citep{noiry2018spectral}(Proposition 3.1). These articles have given formulae for the moments of the LSD using techniques in free probability theory and graphs respectively. However to the best of our knowledge, direct links between the two types of moments formulae have not be explored. We describe a moment formula using partitions that relates these moments to the limiting moments in the Wigner case. It turns out that again only the class of special symmetric partitions  contribute to the moments. We also study the LSD of covariance matrices with sparse entries and find a relation of the LSD with free Poisson as well as Poisson variables.

 In \citep{bose2020some}, the ESD of symmetric patterned matrices such as symmetric reverse circulant, symmetric circulant, symmetric Toeplitz and symmetric Hankel matrices have been dealt with where the entries of the matrices are independent and satisfy certain moment conditions. With similar assumptions on the entries of $X_p$, where it is one of the four matrices mentioned, it can be shown that the ESD of $X_pX_p^T$ converges almost surely to non-random real probability distributions. Further, $X_p$ could be chosen to be the non-symmetric versions of these patterned matrices and the ESD of $X_pX_p^T$ can be shown to converge almost surely to non-random real probability distributions. We shall deal with all these results in a subsequent article. For a brief overview of which partition sets play crucial role in describing the limiting moment of these matrices, see Remark \ref{other patterns}.
 
\section{Main results}\label{main results}
The notion of \textit{multiplicative extension} is required to describe our results. Let $[k]:=\{1,2,\ldots, k\}$ and let $\mathcal{P}(k)$ denote the set of all partitions of $[k]$. Let $\mathcal{P}_{2}(2k)$ be the set of \textit{pair-partitions} of $[2k]$.   
Suppose $\{c_k, \ k \geq 1\}$ is any sequence of numbers. Its multiplicative extension is defined on $\mathcal{P}_k$, $k \geq 1$ as follows. For any $\sigma\in \mathcal{P}_k$,  define 
\begin{equation*}c_{\sigma}=\prod_{V \ \text{is a block of}\ \sigma} c_{|V|}.
\end{equation*}
We also need the following notion of \textit{special symmetric partitions} which were introduced in \citep{bose2021random}.
\begin{definition}(\textit{Special Symmetric Partition})\label{ss(2k)}
Let $\sigma \in \mathcal{P}(k)$ and let $V_1, V_2, \ldots $ be the blocks of $\sigma$, arranged in ascending order of their smallest elements.  This partition is said to be \textit{special symmetric} if 
\begin{enumerate}
\item[(i)] The last block is a union of sets of even sizes 
of consecutive integers.
\item[(ii)] Between any two successive elements of any block there are even number of elements from any other  block, and they each appear equal number of times in an odd and an even position.
\end{enumerate}
\end{definition}
We denote the set of all special symmetric partitions of $[k]$ by $SS(k)$ and the set of all special symmetric partitions of $[k]$ with $b$ distinct blocks by $SS_b(k)$. Clearly $SS(k)=\emptyset$ when $k$ is odd. 

Let $SS_b(2k)\subset SS(2k)$ where each partition has exactly $b$ blocks.  Clearly $b \leq k$ always. The one-block partition $\{1, 2, \ldots , 2k\}\in SS(2k)$. Each block of $SS(2k)$ has even size. There are $\pi \in SS(2k)$ that are either crossing or not paired. For example the partition  $\pi=\{\{1, 2, 5, 6\}, \{3, 4, 7, 8\}\}\in SS(8)$ but is crossing. On the other hand, $\sigma=\{\{1, 2, 6, 7\}, \{3, 4, 5, 8\}\}\notin SS(8)$.\\ 
It is easy to check that 
$$SS(2k)\cap \mathcal{P}_{2}(2k)=NC_2(2k) \subset SS(2k).$$ 
\vskip3pt

Now we introduce a set of assumptions on the entries $\{x_{ij,n}\}$ of $X_p$. We drop the suffix $n$ for convenience wherever there is no scope for confusion. For any real-valued function $g$ on $[0, \ 1]$,  $\|g\|:=\sup_{0 \leq x\leq 1} |g(x)|$ will denote its sup norm. \\

\noindent \textbf{Assumption A}. 
Suppose $\{g_{2k,n}\}$ is a sequence of bounded Riemann integrable functions on $[0,1] ^2$. There exists a sequence $\{t_n\}$ with $t_n\in [0,\infty]$ such that 
\begin{enumerate}
\item [(i)] For each $k \in \mathbb{N}$,
\begin{align}
 & n \ \mathbb{E}\left[x_{ij}^{2k}\boldsymbol {1}_{\{|x_{ij}|\leq t_n\}}\right]= g_{2k,n}\Big(\frac{i}{p},\frac{j}{n}\Big) \ \ \ \text{for } \ 1\leq i\leq p, 1 \leq j \leq n, \label{gkeven}\\
& \displaystyle \lim_{n \rightarrow \infty} \ n^{\alpha} \underset{1\leq i \leq p,1 \leq j \leq n}{\sup} \ \mathbb{E}\left[x_{ij}^{2k-1}\boldsymbol {1}_{\{|x_{ij}|\leq t_n\}}\right] = 0  \ \ \text{for any } \alpha<1 . \label{gkodd}
\end{align}
\item [(ii)] The functions $g_{2k,n}(\cdot), n \geq 1$ converges uniformly to $g_{2k}(\cdot)$ for each $k \geq 1$. 
\item[(iii)] Let  $M_{2k}=\|g_{2k}\|$, $M_{2k-1}=0$ for all $k \geq 1$. Then $\alpha_{2k}=  \sum_{\sigma \in \mathcal{P}(2k)} M_{\sigma}$  satisfies \textit{Carleman's condition}, 
$$ \displaystyle \sum_{k=1}^{\infty} \alpha_{2k}^{-\frac{1}{2k}}= \infty.$$
\end{enumerate}

\begin{theorem}\label{res:XXt}
Let $X_p$ be a $p \times n$ real matrix with independent entries $\{x_{ij,n}; 1\leq i\leq p, 1\leq j \leq n\}$ that satisfy Assumption A and $p/n \rightarrow y\in(0,\infty)$ as $n \rightarrow \infty$. Suppose $Z_p$ is a $p \times n$ real matrix whose entries are $y_{ij}=x_{ij}\boldsymbol {1}_{[|x_{ij}|\leq t_n]}$. Then
\begin{enumerate}
\item[(a)] The ESD of $Z_pZ_p^T$ converges  weakly almost surely to a probability measure $\mu$ say, whose moments are determined by the functions $g_{2k}, k\geq 1$ as in \eqref{moment-XXt}.
\item[(b)] Moreover, if $\frac{1}{n} \displaystyle\sum_{i=1,j=1}^{p,n} \ x_{ij}^2\boldsymbol {1}_{\{|x_{ij}| > t_n\}} \rightarrow 0, \ \mbox{almost surely (or in probability)}$, then the ESD of $S_p=X_pX_p^T$ converges weakly almost surely (or in probability) to the probability measure $\mu$ given in (a).
\end{enumerate} 
\end{theorem}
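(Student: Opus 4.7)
The approach is the method of moments. For part (a), I would first expand the normalised trace as
\[ \frac{1}{p}\E[\Tr((Z_pZ_p^T)^k)] = \frac{1}{p}\sum_{\mathbf{i}\in[p]^k,\;\mathbf{j}\in[n]^k} \E\bigl[y_{i_1 j_1}y_{i_2 j_1}y_{i_2 j_2}y_{i_3 j_2}\cdots y_{i_k j_k}y_{i_1 j_k}\bigr], \]
and group the tuples by the partition $\sigma\in\mathcal{P}(2k)$ on the $2k$ positions induced by equality of the $(i,j)$-labels. Since the $y_{ij}$ are independent, Assumption A(i) factorises each expectation into one factor per block of $\sigma$; odd-sized blocks contribute negligibly by \eqref{gkodd}, while each even block of size $2m$ contributes $\tfrac{1}{n}g_{2m,n}$ evaluated at the block's common label, with the remainder controlled uniformly by Assumption A(ii).

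The combinatorial heart of the argument is a degree count: for $\sigma$ with all even blocks, the number of compatible $(\mathbf{i},\mathbf{j})$ grows like $p^{a(\sigma)}n^{b(\sigma)-a(\sigma)}$, where $a(\sigma)$ is the number of distinct $i$-values and $b(\sigma)$ is the number of distinct $(i,j)$-pairs. After dividing by $p$ and by $n^{b(\sigma)}$ (the $1/n$ factor per block), a non-zero limit survives precisely when $\sigma\in SS(2k)$, adapting the special-symmetric bookkeeping of \citep{bose2021random} to the bipartite row/column structure of $Z_pZ_p^T$. For such $\sigma$ the index sum converts to a Riemann integral of the multiplicative extension $g_\sigma$ by Assumption A(ii), yielding the moment formula \eqref{moment-XXt}. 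Each limit moment is bounded above by a constant multiple of $\alpha_{2k}$ through $M_{2m}=\|g_{2m}\|$, so Assumption A(iii) transfers Carleman's condition to the limit moments, and these moments therefore identify a unique probability measure $\mu$.

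To lift moment convergence to almost-sure convergence I would establish
\[ \var\!\left(\tfrac{1}{p}\Tr\bigl((Z_pZ_p^T)^k\bigr)\right) = \bO(n^{-2}) \]
by running the same partition analysis on a product of two traces: any cross-link between the two cycles costs an extra factor of $n$ in the degree count, while fully decoupled configurations cancel against the square of the mean. Borel--Cantelli then delivers a.s.~convergence of every moment, hence of the ESD, since $\mu$ is Carleman-determined. For part (b), the Hoffman--Wielandt inequality for singular values yields
\[ \tfrac{1}{p}\sum_i \bigl|\lambda_i(S_p)-\lambda_i(Z_pZ_p^T)\bigr|\;\le\;\tfrac{1}{p}\|X_p-Z_p\|_F\bigl(\|X_p\|_F+\|Z_p\|_F\bigr), \]
and the right-hand side tends to zero a.s.~(respectively in probability) by the stated tail-truncation hypothesis together with the $L^2$ boundedness of $\tfrac{1}{\sqrt{pn}}\|Z_p\|_F$, so the ESDs of $S_p$ and $Z_pZ_p^T$ share the limit $\mu$.

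The main obstacle is the combinatorial identification step: proving that the surviving partitions are exactly $SS(2k)$ requires carefully tracking how the alternation between $i$- and $j$-indices along the cyclic word interacts with both the even-block condition and the ``equal odd/even position'' condition of Definition \ref{ss(2k)}, and uniformly dominating the tail of non-contributing partitions via Assumption A(iii) in order to interchange limit and sum. The Wigner analysis of \citep{bose2021random} provides a template, but the bipartite nature of $Z_pZ_p^T$ introduces an additional $i$-versus-$(i,j)$ bookkeeping that must be matched with the definition of $SS(2k)$.
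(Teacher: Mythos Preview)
Your overall strategy---moment expansion, partition combinatorics identifying $SS(2k)$ as the surviving class, Carleman for uniqueness, and a Frobenius-norm perturbation bound for part (b)---matches the paper's proof closely. The paper additionally begins with an explicit reduction to mean-zero entries (via the same L\'evy/Frobenius inequality you invoke for part (b)); you absorb this into the ``odd blocks are negligible'' clause, which is fine for the first-moment computation but makes the concentration step messier.

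There is, however, a genuine gap in your concentration argument. The claim $\var\!\bigl(\tfrac{1}{p}\Tr((Z_pZ_p^T)^k)\bigr)=\bO(n^{-2})$ is \emph{false} in the exploding-moments regime of Assumption~A. Take $k=1$: then
\[
\var\!\Bigl(\tfrac{1}{p}\sum_{i,j}y_{ij}^2\Bigr)=\tfrac{1}{p^2}\sum_{i,j}\bigl(\E[y_{ij}^4]-(\E[y_{ij}^2])^2\bigr)\approx \tfrac{1}{p^2}\cdot pn\cdot \tfrac{g_4}{n}=\tfrac{g_4}{p}=\Theta(n^{-1}),
\]
since here $\E[y^4]\asymp g_4/n$ rather than $O(n^{-2})$ as in the classical i.i.d.\ setting. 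For general $k$ the same scaling persists: a cross-link between the two cycles does cost one factor of $n$ in the vertex count, but each block contributes a moment of order $n^{-1}$ (not $n^{-2}$), so the net is $O(n^{-1})$. This is not summable, and Borel--Cantelli via Chebyshev fails.

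The paper repairs this by bounding the \emph{fourth} centred moment instead: with four jointly- and cross-matched circuits one saves two degrees of freedom (Lemma~\ref{lem:moment}), yielding $\tfrac{1}{p^4}\E\bigl[\Tr-\E\Tr\bigr]^4=O(p^{-3/2})$, which is summable. Your argument goes through once you replace the variance estimate by this fourth-moment bound. The remaining differences---the paper's L\'evy-distance inequality (Lemma~\ref{lem:metric}) versus your Hoffman--Wielandt route for part (b)---are cosmetic; both reduce to controlling $\|X_p-Z_p\|_F$ against $\|X_p\|_F+\|Z_p\|_F$.
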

\begin{remark}\label{unbounded support}
Suppose the entries of $X_p$ satisfy Assumption A. Let for every $m\geq 1$, $f_{2m}(x)=\int_{[0,1]}g_{2m}(x,y)\ dy$. Now suppose there exist $m>1$ such that $ \displaystyle \inf_{t\geq 1}\int_{[0,1]}\bigg(\frac{f_{2m}(x)}{m!}\bigg)^t \ dx = c >0$. Then the LSD $\mu$ in Theorem \ref{res:XXt} has unbounded support. 
\end{remark}
\begin{cor}\label{p=n}
Suppose $p=n$ and the entries of the matrix $X_p$ satisfy Assumption A. Then from Theorem 2.1 in \citep{bose2021random} we know that the ESD of $W_n$, the Wigner matrix (i.e., symmetric matrix with independent entries $\{x_{ij,n}; 1\leq i \leq j \leq n\}$) converges almost surely to a symmetric probability measure $\mu^{\prime}$. Suppose $X$ and $Y$ are two random variables such that $X \sim \mu$ and $Y \sim \mu^{\prime}$. If $\{g_{2k}\}_{k\geq 1}$ are symmetric functions, then, $X \overset{\mathcal{D}}{=} Y^2$.
\end{cor}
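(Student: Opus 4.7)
The strategy is the method of moments. Assumption A(iii) ensures Carleman's condition, so the measure $\mu$ is determined by its moments; consequently it suffices to verify that $\int x^k\,d\mu(x) = \int y^{2k}\,d\mu'(y)$ for every $k \geq 1$, together with $\mathrm{supp}(\mu)\subseteq [0,\infty)$ (which is automatic since $S_p=X_pX_p^T$ is positive semi-definite) and the vanishing of all odd moments of $\mu'$ (which is built into the Wigner LSD because it is a symmetric distribution, and can also be read off Theorem 2.1 of \citep{bose2021random} since $\sigma\in SS(2k+1)=\emptyset$ for odd $k$).

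First I would bring in the two explicit moment formulas side by side. By Theorem \ref{res:XXt}(a), the $k$-th moment of $\mu$ is a sum over $\sigma\in SS(2k)$ of an integral of the multiplicative extension of $\{g_{2m}\}$ reflecting the bipartite walk structure of $\mathrm{tr}(S_p^k)$: each block $V$ of size $2m$ contributes $g_{2m}$ evaluated at a (row-variable, column-variable) pair, with the integration ranging over uniform variables in $[0,1]$ attached to the distinct row-vertices and column-vertices of the walk. By Theorem 2.1 of \citep{bose2021random}, the $2k$-th moment of $\mu'$ has a parallel expression as a sum over $\sigma\in SS(2k)$, but arising from the closed walk structure of $\mathrm{tr}(W_n^{2k})$, so that each block $V$ of size $2m$ contributes $g_{2m}$ evaluated at a pair of uniform variables drawn from a single pool attached to the $2k$ vertex positions of the walk. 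In both formulas the partition set is exactly $SS(2k)$, which is the content I would exploit.

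The main obstacle, and the only genuinely combinatorial step, is to show that for every fixed $\sigma\in SS(2k)$ the two integrals coincide when $p=n$ and each $g_{2m}$ is symmetric in its two arguments. I would establish this via the standard bijection between a closed Wigner walk $i_1\to i_2\to\cdots\to i_{2k}\to i_1$ and a closed covariance walk of the same length, obtained by relabelling the odd-indexed vertices as ``row'' vertices and the even-indexed vertices as ``column'' vertices; vertex-identifications dictated by $\sigma$ transfer across this bijection, so the count of free variables matches. Because $p=n$ puts row and column variables on the common interval $[0,1]$, and because symmetry of $g_{2m}$ makes the order of its two arguments immaterial, the two integrands agree term by term after this relabelling. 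Summing over $\sigma\in SS(2k)$ then gives $\int x^k\,d\mu = \int y^{2k}\,d\mu'$ for every $k$, whence $X\stackrel{\mathcal D}{=} Y^2$.
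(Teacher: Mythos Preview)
Your proposal is correct and follows essentially the same approach as the paper: both arguments compare the moment formula \eqref{moment-XXt} for $\mu$ with the analogous formula (equation (4.11) in \citep{bose2021random}) for $\mu'$, observe that when $p=n$ the factor $y^r$ disappears, use the symmetry of each $g_{2m}$ to identify the integrands term by term over $SS(2k)$, and then invoke moment determinacy. The paper's proof is terser---it simply asserts that the two moment formulas agree---whereas you spell out the walk bijection and the role of symmetry more explicitly; but the substance is the same.
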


\begin{remark}\label{other patterns}
Consider $X_p$ to be of the form of other patterned matrices such as reverse cicrulant, circulant, Toeplitz and Hankel. The following link functions describe these matrices:
\begin{itemize}
\item[(i)] Symmetric reverse circulant ($R_n^{(s)}$): $L(i,j)= (i+j-2)(\text{mod }n), 1 \leq, i,j \leq n$.
\item[(ii)] Non-symmetric reverse circulant ($R_p$): $L(i,j)= \begin{cases}
(i+j-2)(\text{mod }n) & i\leq j,\\
-[(i+j-2)(\text{mod }n) ] & i> j.
\end{cases}$ 
\item[(iii)] Symmetric circulant ($SC_n$): $L(i,j)= n/2-|n/2-|i-j||, 1 \leq, i,j \leq n$.
\item[(iv)] Circulant ($C_p$): $L(i,j)=(j-i) (\text{mod }n)$.
\item[(v)] Symmetric Toeplitz ($T_n^{(s)}$): $L(i,j)= |i-j|, 1 \leq, i,j \leq n$.
\item[(vi)] Non-symmetric Toeplitz ($T_p$): $L(i,j)= i-j$.
\item[(vii)] Symmetric Hankel ($H_n^{(s)}$): $L(i,j)= i+j$.
\item[(viii)] Non-symmetric Hankel ($H_p$): $L(i,j)= \begin{cases}
(i+j) & i\geq j,\\
-(i+j) & i< j.
\end{cases}$ 
\end{itemize}
Suppose the entries satisfy similar conditions as Assumption A. Then the ESD of $X_pX_p^T$ can also be shown to converge weakly almost surely (or in probability) to non-random probability measures. We will be able to show that the partitions that contribute to the limiting moments for these matrices include symmetric partitions and even partitions that have also appeared while dealing with the ESD of independent patterned matrices (when $p=n$) in \citep{bose2020some}.
For the matrices (i), (ii), (iv), (vi), (vii), (viii) the partitions that describe the limiting moments are symmetric partitions. However their contribution in each of the cases might differ. On the other hand, for (iii) and (v), the limiting moments can be described by even partitions and in these cases too their contributions might differ.
  We shall deal with the ESD of $X_pX_p^T$ for all these patterned matrices in details in a subsequent article.
\end{remark}
\section{Relation to existing results}\label{realtion to existing results} 

\subsection{ \textbf{IID entries}}\label{iid}
 Suppose the entries of $X_p$ are $\{x_{ij}/\sqrt{n}\}$ where $\{x_{ij}\}$ are i.i.d. with distribution  $F$ which has mean zero and variance $1$.  
 
 Let $t_n=n^{-1/3}$. Then $g_2\equiv 1$ and $g_{2k}\equiv 0, k>1$, follows using the same argument as Section 5.1 (a) in \citep{bose2021random}.
Now for any $ t > 0$, 
\begin{align*}
\frac{1}{p}\displaystyle \sum_{i,j} \ \big(x_{ij}/\sqrt{n}\big)^2[\boldsymbol {1}_{[|x_{ij}/\sqrt{n}| > t_n]}]= & \frac{1}{np}\displaystyle \sum_{i,j} \ x_{ij}^2[\boldsymbol {1}_{[|x_{ij}| > t_n \sqrt{n}]}]\\
& \leq \frac{1}{np}\displaystyle \sum_{i,j} \ x_{ij}^2[\boldsymbol {1}_{[|x_{ij}| > t]}] \ \ \text{for all large} \ n,\\
& \overset{a.s.}{\longrightarrow} \ \mathbb{E}\big[ x_{11}^2[\boldsymbol {1}_{[|x_{11}| > t]}]\big] \ \ \text{for all large} \ \ n.  
\end{align*}
As $\mathbb{E}[x_{11}^2]=1$, taking $t$ to infinity, the above limit is 0 almost surely. 

 Hence applying Theorem \ref{res:XXt}, the ESD of $S_p$ converges weakly almost surely to $\mu$ whose $k-$th moment is given by 
\begin{align}
\beta_k(\mu)&= \displaystyle \sum_{r=0}^{k-1} \sum_{\underset{\underset{\text{ even generating vertices}}{\text{having }(r+1)}}{\pi \in SS_k(2k)}} y^r 
\end{align}
Incidentally, generating vertices and even generating vertices of partitions or words are common concepts while using the moment method. We describe these notions in details later in Section \ref{preliminaries}.
 
\noindent Now the number of pair matched words of length $2k$ with $r+1$ even generating vertices is shown to be $\frac{1}{r+1}{k \choose r}{{k-1} \choose r}$ in Theorem 5(a) in \citep{bose2008another}. Hence the rhs of the above equation reduces to the $k$th moment of the $MP_y$ law. Hence we obtain that the ESD of $\frac{1}{n}S_p$ converges to the $MP_y$ law when the entries are iid with mean 0 and variance 1.

\subsection{ \textbf{Heavy-tailed entries}} Suppose $\{x_{ij}, 1 \leq i \leq p, 1 \leq j\leq n\}$ are i.i.d. with an $\alpha$-stable distribution ($0<\alpha <2$) and $n/p \rightarrow \gamma \in (0,1]$. Then 
$$\mathbb{P}[|x_{ij}|\geq u]= \frac{L(u)}{u^{\alpha}}, \   u > 0$$ 
where $L$ is a slowly varying function at $\infty$.  
Let $X_p=(x_{ij}/a_p)$ and $$a_p= \inf\{u: \mathbb{P}[|x_{ij}|\geq u]\leq \frac{1}{p}\}.$$ \citep{belinschi2009spectral} proved the existence of LSD of $S_p$  using the method of Stieltjes transform. We show how our theorem  may be used to  give an alternative proof. 
 
For a fixed constant say, $B$, consider the matrix $X_p^B$ with entries are $\frac{x_{ij}}{a_p}\boldsymbol {1}_{[|x_{ij}|\leq B a_p]}$. Then for every fixed $B\in \mathbb{N}$, $X_p^B$ satisfies Assumption A. Hence from Theorem \ref{res:XXt}, there exists a probability measure $\mu_B$ which is the weak limit of the ESD of $S_p^B$, almost surely. Next recall the arguments used for heavy-tailed Wigner matrices in Section 5.2 of \citep{bose2021random}. These arguments can be adapted appropriately for the $S-$matrix. Thus it can be seen that  that $\mu_{S_p}$ converges weakly to $\tilde{\mu}$ in probability. This yields the convergence of Theorem 1.10 in \citep{belinschi2009spectral}.

\subsection{ \textbf{Triangular iid (size dependent matrices)}}\label{triangular iid}
 Suppose $\{x_{ij,n}; 1\leq i \leq p,1 \leq j \leq n \}$ is a sequence of iid random variables with distribution $F_n$ that has finite moments of all orders, for every $n$. Also assume that for every $k \geq 1$,
\begin{align}\label{ck}
n \beta_k(F_n) \rightarrow C_k
\end{align}
where $\beta_k(F_n)$ denotes the $k$th moment of $F_n$. Suppose $\{0,C_2,0,C_4, \ldots \}$ is the cumulant sequence of a probability distribution whose moment sequence satisfies Carleman's condition. 

Let the entries of $X_p$ be  $\{x_{ij,n}; 1\leq i \leq p,1 \leq j \leq n \}$ as described above. Then from Theorem \ref{res:XXt}, it follows that as $p/n \rightarrow y>0$ the ESD of $S_p$ converges to a non-random probability distribution $\mu$ whose moment sequence is determined by $\{C_k\}_{k \geq 1}$ and $y$. Indeed, from \eqref{ck} it follows that Assumption A is true with $t_n=\infty$ and $g_{2k}\equiv C_{2k}$ for all $k \geq 1$. Therefore from Theorem \ref{res:XXt} we find that there is a  probability measure $\mu$ such that the ESD of $S_p$ converges to $\mu$ and its  moments are given as follows:
\begin{align}\label{Zak-limit}
\beta_{k}(\mu)=\displaystyle \sum_{r=0}^{k-1}\sum_{\underset{\underset{\text{ even generating vertices}}{\text{having }(r+1)}}{\pi \in SS(2k)}} y^r C_{\pi}.
\end{align}

Now we consider the set up of  \citep{zakharevich2006generalization}
for $X_p$, i.e, the entries of $X_p$ satisfy the conditions of Theorem 3.2 in \citep{Benaych-Georges2012} and Proposition 3.1 in \citep{noiry2018spectral}. Clearly Assumption A is satisfied with $t_n=\infty$, $g_2\equiv 1$ and $g_{2k} \equiv C_{2k}, k \geq 2$. Hence Theorem \ref{res:XXt} can be applied and the resulting LSD, say, $\mu_{bcn}$ has moments as in \eqref{Zak-limit}. In Section \ref{connection-ss2k}, we shall verify that this LSD is same as obtained in the above references.

\vskip3pt

\noindent \textbf{Connection to the limiting moments of the Wigner matrices}: As observed in the previous paragraphs, when the entries are triangular iid that satisfy \eqref{ck}, $g_{2k}\equiv C_{2k}, k \geq 1$ and thus symmetric. Therefore from Corollary \ref{p=n}, we have that $X \overset{\mathcal{D}}{=} Y^2$ where $X \sim \mu$, $Y \sim \mu^{\prime}$ , $\mu$ and $\mu^{\prime}$ are the LSDs of the covariance matrix and the Wigner matrix respectively.

\subsubsection{ \textbf{Sparse S}} Suppose the entries of $X_p$ are i.i.d. $Ber(p_n)$ for each $n$, with $np_n \rightarrow \lambda>0$. Then  the entries satisfy \eqref{ck} with $C_{k}\equiv \lambda$ for all $k\geq 1$. Hence by Theorem \ref{res:XXt} as discussed in the beginning of this section, the ESD of $S_p$ converges weakly almost surely to $\mu$ whose moments are as follows (see \eqref{Zak-limit}):
\begin{equation}\label{sparse-hom}
\beta_{k}(\mu)= \displaystyle \sum_{r=0}^{k-1}\sum_{\underset{\underset{\text{ even generating vertices}}{\text{having }(r+1)}}{\pi \in SS(2k)}} y^r \lambda^{|\pi|}.
\end{equation}

Let $E(2k)$ (and $NCE(2k)$) be the set of partitions (and non-crossing partitions) whose blocks are all of even sizes. Then from Definition \ref{ss(2k)}, we have $$NCE(2k) \subset SS(2k) \subset E(2k).$$ Therefore we have the following:\\
\vskip3pt

\noindent Case 1: $y \leq 1$. Then from \eqref{sparse-hom}
\begin{equation}\label{y<=1}
\displaystyle \sum_{\pi \in NCE(2k)} (\lambda y)^{|\pi|} < \beta_{k}(\mu) < \sum_{\pi \in E(2k)} \lambda^{|\pi|}
\end{equation}
Case 2: $y > 1$. Then from \eqref{sparse-hom}
\begin{equation}\label{y>1}
\displaystyle \sum_{\pi \in NCE(2k)} y^{|\pi|} < \beta_{k}(\mu) < \sum_{\pi \in E(2k)} (\lambda y)^{|\pi|}
\end{equation}

Now suppose $P_1(\gamma)$ is a free Poisson variable with mean $\gamma$ and $P_2(\gamma)$  is Poisson with mean $\gamma$. Let $Y$ be a random variable which takes value $1$ and $-1$ with probability $\frac{1}{2}$ each. Suppose $Y$ is independent of $P_1(\gamma)$ and $P_2(\gamma)$. Consider $Q_1(\gamma)= P_1(\gamma)Y$ and $Q_2(\gamma)= P_2(\gamma)Y$. Then the moments of $Q_1(\gamma)$ and $Q_2(\gamma)$ are give as follows:
\begin{align}
\mathbb{E}[Q_1^k(\gamma)]&=\begin{cases}
0 \ & \ \text{ if } k \text{ is odd },\\
\displaystyle \sum_{\pi \in NCE(k)} \gamma^{|\pi|} & \ \text{ if } k \text{ is even }. 
\end{cases} \label{Q_1}\\
\mathbb{E}[Q_2^k(\gamma)]& =\begin{cases}
0 \ & \ \text{ if } k \text{ is odd },\\
\displaystyle \sum_{\pi \in E(k)} \gamma^{|\pi|} & \ \text{ if } k \text{ is even }. 
\end{cases}\label{Q_2}
\end{align}

Hence from \eqref{y<=1} and \eqref{y>1}, we have that if $X$ is a random variable such that $X \sim \mu$, then $X$ is dominated below and above by $(Q_1(\lambda y))^2$ and $(Q_2(\lambda))^2$ when $y\leq 1$ and by $(Q_1(\lambda ))^2$ and $(Q_2(\lambda y))^2$ when $y>  1$ in the sense that
\begin{align*}
\mathbb{E}[(Q_1(\lambda y))^{2k}]< \mathbb{E}[X^k]< \mathbb{E}[(Q_2(\lambda ))^{2k}] \ \ \text{ for every } k\geq 1, y\leq 1,\\
\mathbb{E}[(Q_1(\lambda))^{2k}]< \mathbb{E}[X^k]< \mathbb{E}[(Q_2(\lambda y))^{2k}] \ \ \text{ for every } k\geq 1, y> 1.
\end{align*} 

\subsubsection{ \textbf{Matrices with a variance profile}}
Consider the matrix $X_p$ with a variance profile $\sigma$, i.e. suppose the entries of the matrix $X_p$ are $\{y_{ij,n}=\sigma(i/p,j/n)x_{ij,n}; 1\leq i \leq p, 1 \leq j\leq n \}$ where $\{x_{ij,n};1\leq i \leq p, 1 \leq j\leq n \}$ are i.i.d. for every fixed $n$ and satisfy the two conditions given in \eqref{ck} and $\sigma$ is a bounded piecewise continuous function on $[0,1]^2$. 
 Then the ESD of $S_p$ converges weakly almost surely to a symmetric probability measure $\nu$ whose $k$th moment is determined by $\sigma$ and $\{C_{2m}\}_{1\leq m\leq 2k}$.

Indeed, observe that $y_{ij,n}$ satisfy Assumption A with $g_{2k}\equiv \sigma^{2k}C_{2k}$. 
 Thus using Theorem \ref{res:XXt}, we conclude that the ESD of $S_p$ converges weakly almost surely to a probability measure $\nu$. 

 We shall give a description of the moments for the limit distribution, $\nu$ after the proof of Theorem \ref{res:XXt} as it is quite involved.  It is evident from the moment formula that for two distinct special symmetric partitions $\pi_1$ and $\pi_2$ with the same number of blocks and block sizes, the contribution in the limiting moments can differ. 

Suppose that $p=n$ and the entries of $X_p$ are $\{y_{ij}/\sqrt{n}; 1\leq i,j \leq n\}$ where $y_{ij}= \sigma(i/n,j/n)x_{ij}$ with $\sigma : [0,1]^2 \longrightarrow [0,1]$, $\sigma(x,y)= 1, x \leq y$ and 0 otherwise and $\{x_{ij}; 1\leq i,j \leq n \}$ are iid with mean zero and variance 1. Then a truncation argument similar to Section \ref{iid}, can be used to make the reduction that $\{x_{ij}\}$ has all moments finite. Hence from the discussion above we can conclude that  the ESD of $TT^{*}$ where $T$ is the triangular matrix described in Lemma 8.4 in \citep{Dykema2002DToperatorsAD} converges weakly almost surely to a non-random probability measure.

\section{Link function, Circuits and Words}\label{preliminaries}
\noindent Link functions, circuits and words have been described in Section 3 of \citep{bose2021random}. In this section, we describe these concepts particularly for the sample covariance matrix.

\noindent \textbf{Link function}:  The relevant link function for the sample covariance matrix is given by a pair of functions as follows:
\begin{align*}
L1(i,j)=(i,j) \ \ \text{ and } \ \ L2(i,j)=(j,i).
\end{align*}
We shall explain the relevance of these link functions for the covariance matrix below.\\


\noindent \textbf{Circuits}: For the matrix $S_p$, observe that, for a circuit $\pi$ of length $2k$, $\pi(0)=\pi(2k); 1 \leq \pi(2i)\leq p, \forall 1 \leq i \leq k; 1 \leq \pi(2i-1)\leq n, \forall 1 \leq i \leq k $. Next let
\begin{align*}
\xi_{\pi}(2i-1) &= L1(\pi(2i-2),\pi(2i-1)), 1 \leq i \leq k\\
\xi_{\pi}(2i) &= L2(\pi(2i-1), \pi(2i)),1 \leq i \leq k.
\end{align*}
Then,
\begin{align}\label{genmoment-XXt}
\mathbb{E}\big[\Tr(S_{p}^{k})\big]& =\mathbb{E}\big[\Tr(X_{p}X_p^*)^{k}\big]\nonumber \\
 & =\sum_{\pi:\ell(\pi)=k}x_{L1(\pi(0),\pi(1))}x_{L2(\pi(1),\pi(2))}\cdots x_{L2(\pi(2k-1),\pi(2k))}\nonumber\\
 &= \sum_{\pi:\ell(\pi)=2k}\mathbb{E}[Y_{\pi}],
\end{align}
where $Y_{\pi}= \displaystyle \prod_{i=1}^k y_{\xi_{\pi(2i-1)}}y_{\xi_{\pi(2i)}}$.\\

For any $\pi$, the values $Lt(\pi(i-1),\pi(i)),t=1,2$ will be called \textit{edges}. When an edge appears more than once in a circuit $\pi$, then it is called 
\textit{matched}. Any $m$ circuits $\pi_1,\pi_2,\ldots,\pi_m$ are said to be \textit{jointly-matched} if each edge occurs at least twice across all circuits. They are said to be \textit{cross-matched} if each circuit has an edge which occurs in at least one of the other circuits. \\


\noindent \textbf{Equivalence of Circuits}: For the sample covariance matrix, as there are two associated link functions $L1$and $L2$, the above holds with the following equivalence relation:
\begin{align*}
Lt(\pi_1(i-1),\pi_1(i))=Lt(\pi_1(j-1),\pi_1(j))
\Longleftrightarrow Lt(\pi_2(i-1),\pi_2(i))=Lt(\pi_2(j-1),\pi_2(j)), t=1,2.
\end{align*} 


\noindent \textbf{The class $\Pi(\boldsymbol {\omega})$}: First recall the notion of class $\Pi(\boldsymbol {\omega})$ from Section 3 of \citep{bose2021random}. Now suppose $\boldsymbol {\omega}$ is a word arising from the sample covariance matrix. Then for $\boldsymbol {\omega}$, $\boldsymbol {\omega}[i]=\boldsymbol {\omega}[j] \ \Leftrightarrow \xi_{\pi}(i)= \xi_{\pi}(j)\}$. This  implies 
\begin{align*}
Lt(\pi(i-1),\pi(i))& = Lt(\pi(j-1),\pi(j)) \ \ \ \text{ if } i  \ \text{ and } j \ \text{ are of same parity}, \ t=1,2 \\
Lt(\pi(i-1),\pi(i))& = Lt^{\prime}(\pi(j-1),\pi(j)) \ \ \ \text{ if } i  \ \text{ and } j \ \text{ are of different parity}, \ t,t^{\prime}\in \{1,2\}, t\neq t^{\prime}. 
\end{align*}
Therefore the class $\Pi_S(\boldsymbol {\omega})$ is given as follows:
\begin{align}\label{S-link}
\Pi_S(\boldsymbol {\omega}) & = \{\pi; \boldsymbol {\omega}[i]=\boldsymbol {\omega}[j] \ \Leftrightarrow \xi_{\pi}(i)= \xi_{\pi}(j)\} \nonumber \\
& = \Big\{\pi: \boldsymbol {\omega}[i]=\boldsymbol {\omega}[j] \Leftrightarrow (\pi(i-1),\pi(i))= (\pi(j-1),\pi(j))\ \ \text{ or } (\pi(i-1),\pi(i))= (\pi(j),\pi(j-1))\Big\}.
\end{align}

Recall from \eqref{genmoment-XXt} that the $k$th moment of the $S-$matrix involves the $2k$th moment of the entries of $X_p$. 

\vskip5pt
As we mentioned in Section \ref{introduction}, we will find a connection between the limit moments of the Wigner matrix and the covariance matrix. We will also find out that the link function of the Wigner matrix and the partitions that contribute to its limiting moments plays a crucial role in finding the LSD of the Sample covariance matrix. 

We denote the link function of the Wigner matrix to be $L_{W}(i,j)=(\min(i,j),\max(i,j))$. For words with the Wigner link function $L_W$, the class $\Pi_W(\boldsymbol {\omega})$ is given as follows:
\begin{align}\label{Wignerlink}
\Pi_W(\boldsymbol {\omega}) &= \Big\{\pi: \boldsymbol {\omega}[i]=\boldsymbol {\omega}[j] \Leftrightarrow L_W(\pi(i-1),\pi(i))= L_W(\pi(j-1),\pi(j))\Big\} \nonumber \\
&= \Big\{\pi: \boldsymbol {\omega}[i]=\boldsymbol {\omega}[j]  \Leftrightarrow (\pi(i-1),\pi(i))= (\pi(j-1),\pi(j))
 \text{ or } (\pi(i-1),\pi(i))= (\pi(j),\pi(j-1))\Big\}.
\end{align}
Next, we make a key observation about the classes $\Pi_S(\boldsymbol {\omega})$ and $\Pi_W(\boldsymbol {\omega})$. 
\vskip5pt
\noindent \textbf{Observation 1}: Let $\tilde{\Pi}_W(\boldsymbol {\omega})$ be the possibly larger class of the circuits for the Wigner Link function with range $1 \leq \pi(i) \leq \max(p,n), 0 \leq i \leq 2k$. Then for a word $\boldsymbol{\omega}$, 
\begin{equation}\label{W-S}
\Pi_S(\boldsymbol {\omega})\subset \tilde{\Pi}_W(\boldsymbol {\omega}). 
\end{equation}

Next we recall the definition of \textit{generating and non-generating vertices} from \citep{bose2021random}.
\begin{definition} 
If $\pi$ is a circuit then any $\pi(i)$  will be called a \textit{vertex}. This vertex is \textit{generating} if $i=0$ or $\boldsymbol {\omega}[i]$ is the first occurrence of a letter in the  word $\boldsymbol {\omega}$ corresponding to $\pi$. All other vertices are \textit{non-generating}.
\end{definition} 

For example, for the word $abc$,  $\pi(0), \pi(1), \pi(2)$ and $\pi(3)=\pi(0)$  are generating.
For the word $aaa$, $\pi(0)$ and $\pi(1)$ are generating.
For the word $abcabc$ $\pi(0), \pi(1), \pi(2)$ and $\pi(3)$ are generating. It so happens that in this case due to the structure of the word, $\pi(3)=\pi(0)$. \\

\vskip5pt
 
\noindent \textbf{Even and odd generating vertices:} For the sample covariance matrix, $1 \leq \pi(2i)\leq p, \forall 1 \leq i \leq k; 1 \leq \pi(2i-1)\leq n, \forall 1 \leq i \leq k $. A generating vertex $\pi(i)$ is called even (odd) if $i$ is even (odd). Note that any word has at least one even and one odd generating vertices as $\pi(0)$ is an even generating vertex and $\pi(1)$ is an odd generating vertex. So for a matched word with $b(\leq k/2)$ distinct letters there can be $(r+1)$ even generating vertices where $0 \leq r \leq b-1$.

Observe that  
\begin{align}\label{S-Pi(omega)}
\big|\Pi_S(\boldsymbol {\omega})\big| = \big|\big\{ &\big(\pi(0), \pi(1),\ldots,\pi(2k)\big): 1\leq \pi(2i)\leq p, 1 \leq \pi(2i-1)\leq n \text{ for } i=0,1,\ldots,k,  \nonumber\\
 &\pi(0)=\pi(2k), \quad \xi_{\pi}(i)=\xi_{\pi}(j) \text{ if and only if }\boldsymbol {\omega}[i]=\boldsymbol {\omega}[j]  \big\}\big|.
\end{align}
Note that the circuits corresponding to a word $\boldsymbol {\omega}$ are completely determined by the generating vertices. 
The vertex $\pi(0)$ is always generating, and there is one generating vertex for each new letter in $\boldsymbol{\omega}$. So, if $\boldsymbol {\omega}$ has $b$ distinct letters then the number of generating vertices is $(b+1)$. As seen in the example above, the numerical value of some of these generating vertices may be identical, depending on the nature of the word. In any case, as $p/n\rightarrow y>0$,
\begin{equation}\label{cardiality of word}
| \Pi_S(\boldsymbol {\omega})| = \mathcal{O}(n^{b+1})\ \ \text{whenever} \ \omega \text{ has } \\   b \ \ \text{distinct letters}.
\end{equation} 

We shall see later that the existence of 
\begin{align}\label{word limit}
\lim_{n \rightarrow \infty} \frac{| \Pi_S(\boldsymbol {\omega})|}{n^{b+1}} 
\end{align}
is tied very intimately to the LSD of $S$. We shall look into it in the next section.

\section{Proofs}\label{proofs}
First, we recall a fact for words with the Wigner link function $L_W$.  
\begin{lemma}[Lemma 3.1 and 3.3 in \citep{bose2021random}]\label{W-words}
Consider the Wigner link function. For any word $\boldsymbol{\omega}$  with $b$ distinct letters  
\begin{equation}
\lim_{n \rightarrow \infty}\frac{| \Pi_W(\boldsymbol {\omega})|}{n^{b+1}}= \begin{cases}
1, & \ \  \omega\in SS_b(2k)\\
0, & \ \  \omega\in \mathcal{P}(2k) \setminus SS_b(2k).
\end{cases}
\end{equation} 
\end{lemma}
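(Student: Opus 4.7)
The plan is to count circuits directly via their generating vertices. A word $\boldsymbol{\omega}$ with $b$ distinct letters has $b+1$ generating positions (position $0$ together with the first appearance of each letter), and each generating vertex takes values in $\{1,\ldots,n\}$. This immediately gives the crude bound $|\Pi_W(\boldsymbol{\omega})|\leq n^{b+1}$. The question is whether this upper bound is achieved asymptotically, i.e., whether a generic assignment of generating vertices extends uniquely to a valid circuit.

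The structural reduction uses the fact that for the Wigner link $L_W(a,b)=(\min(a,b),\max(a,b))$, the relation $L_W(\pi(i-1),\pi(i))=L_W(\pi(j-1),\pi(j))$ is equivalent to the unordered pair equality $\{\pi(i-1),\pi(i)\}=\{\pi(j-1),\pi(j)\}$. So at each non-generating position $i$, the unordered pair $\{u,v\}$ associated to letter $\boldsymbol{\omega}[i]$ was fixed at its first occurrence; for feasibility we need $\pi(i-1)\in\{u,v\}$, after which $\pi(i)$ is forced to be the other element. I would scan the circuit left to right: at each non-generating position either the propagation succeeds and $\pi(i)$ is uniquely determined, or a hidden equation among generating vertices must be imposed.

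For the direction $\boldsymbol{\omega}\in SS_b(2k)$, I would induct on the block structure to show the propagation always succeeds and the closure $\pi(2k)=\pi(0)$ imposes no further constraint: condition (i) of Definition \ref{ss(2k)} (the last block is a union of even-sized runs of consecutive integers) ensures the circuit closes automatically, while condition (ii) (balanced even interleaving between successive block elements, with equal odd/even position parities) ensures that each block's ``current endpoint'' is preserved upon revisit. This yields $|\Pi_W(\boldsymbol{\omega})|=n^{b+1}(1+O(n^{-1}))$. For the converse direction, I would locate the earliest position at which (i) or (ii) fails and extract an explicit forced identification $\pi(i^*)=\pi(j^*)$ between two generating positions, cutting the degrees of freedom by at least one and yielding $|\Pi_W(\boldsymbol{\omega})|=O(n^{b})=o(n^{b+1})$.

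The main obstacle is the converse, especially the parity-balance clause of condition (ii). Tracking how the ``orientation'' of each repeated edge flips according to whether its two occurrences sit at positions of the same or opposite parity, and pinpointing exactly when a failure of the balance condition causes an incompatibility upon return to a block, requires a careful case analysis of propagation ``states''. This is the technical core of Lemmas 3.1 and 3.3 in \citep{bose2021random}, whose combinatorial bookkeeping I would follow.
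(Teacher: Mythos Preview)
The paper does not prove this lemma at all: it is stated with the attribution ``Lemma 3.1 and 3.3 in \citep{bose2021random}'' and used as a black box. Your proposal is therefore not comparable to any argument in the present paper; you are sketching (correctly, at the level of an outline) what the proof in the cited reference does, and you explicitly say at the end that you would follow that reference for the bookkeeping. That is exactly the status the paper gives this result.
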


In the next lemma we explore the limit in \eqref{word limit}.
\begin{lemma}\label{S-words}
Let $\boldsymbol{\omega}$ be a word with $b$ distinct letters and $(r+1) \ (0 \leq r \leq b-1)$ even generating vertices. Then
\begin{equation}\label{limit S-words}
\lim_{n \rightarrow \infty}\frac{| \Pi_S(\boldsymbol {\omega})|}{n^{b+1}}= \begin{cases}
y^{r+1}, & \ \  \omega\in SS_b(2k)\\
0, & \ \  \omega \notin  SS_b(2k).
\end{cases}
\end{equation}
\end{lemma}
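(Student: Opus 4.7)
The plan is to handle the two cases $\boldsymbol{\omega}\in SS_b(2k)$ and $\boldsymbol{\omega}\notin SS_b(2k)$ separately, leveraging Observation 1 and Lemma \ref{W-words}.

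For $\boldsymbol{\omega}\notin SS_b(2k)$, Observation 1 gives $\Pi_S(\boldsymbol{\omega})\subset\tilde\Pi_W(\boldsymbol{\omega})$. Since $p/n\to y\in(0,\infty)$, we have $\max(p,n)=\Theta(n)$. The proof of Lemma \ref{W-words} in \citep{bose2021random} is homogeneous in the range parameter, so rescaling it to the range $[1,\max(p,n)]$ yields $|\tilde\Pi_W(\boldsymbol{\omega})|=o\bigl(\max(p,n)^{b+1}\bigr)=o(n^{b+1})$, and hence $|\Pi_S(\boldsymbol{\omega})|/n^{b+1}\to 0$.

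For $\boldsymbol{\omega}\in SS_b(2k)$, I would count $|\Pi_S(\boldsymbol{\omega})|$ through its $b+1$ generating vertices, parameterised by parity: $(r+1)$ even generating vertices with range $[1,p]$ and $(b-r)$ odd generating vertices with range $[1,n]$. The first step is to verify that the matching rule in \eqref{S-link} is compatible with this bipartite range assignment. A direct inspection of the three cases, $L1{=}L1$, $L2{=}L2$, and the mixed case $L1{=}L2$, shows that each matching equates values at positions whose parities force them into a common range: the same-parity cases give $\pi(i-1)=\pi(j-1)$ and $\pi(i)=\pi(j)$ trivially compatibly, while the mixed case (which arises only when $i$ and $j$ have different parity) identifies $\pi(i-1)$ with $\pi(j)$ and $\pi(i)$ with $\pi(j-1)$, pairs that again share a common range. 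Consequently the non-generating vertices automatically inherit the correct range from the generating ones.

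The second step parallels the Wigner argument of \citep{bose2021random}: for $\boldsymbol{\omega}\in SS_b(2k)$ the generating vertices can be chosen freely in their respective ranges, with at most $O(n^b)=o(n^{b+1})$ configurations in which two generating vertices accidentally coincide. Non-generating vertices are then uniquely determined by the matching and, by the first step, lie in the correct range. Therefore
\[
|\Pi_S(\boldsymbol{\omega})|=p^{r+1}n^{b-r}\bigl(1+o(1)\bigr),
\]
and dividing by $n^{b+1}$ yields $(p/n)^{r+1}\to y^{r+1}$, as required. The main obstacle is this second step: one must transport the Wigner freedom result to the bipartite setting and verify that no additional matching-induced constraints appear at leading order. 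The parity-compatibility observation of the first step is precisely what makes this transport routine rather than a genuinely new combinatorial calculation.
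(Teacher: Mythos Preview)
Your treatment of $\boldsymbol{\omega}\notin SS_b(2k)$ is the same as the paper's: both invoke Observation~1 together with Lemma~\ref{W-words}, using $\max(p,n)=\Theta(n)$ to absorb the enlarged range.

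For $\boldsymbol{\omega}\in SS_b(2k)$ the routes differ. The paper proves the count $|\Pi_S(\boldsymbol{\omega})|=p^{r+1}n^{b-r}$ by a direct induction on $b$: it locates the last distinct letter $z$, which by the $SS$ structure appears only in pure even blocks, shows that the generating vertex attached to $z$ is free in its parity range ($[1,n]$ or $[1,p]$ according to whether its first position is odd or even), and then deletes all occurrences of $z$ to reduce to an $SS$ word with $b-1$ letters and the appropriate number of even generating vertices. Your argument instead isolates a single structural fact---that every identification $\xi_\pi(i)=\xi_\pi(j)$ equates vertices of matching parity---and uses it to transport the Wigner determinacy (non-generating vertices uniquely fixed by the generating ones) to the bipartite ranges in one stroke. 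Your route is cleaner and explains conceptually \emph{why} the $S$-count factorises as $p^{r+1}n^{b-r}$, but it leans on a finer consequence of the Wigner proof in \citep{bose2021random} than the bare asymptotic of Lemma~\ref{W-words}: you need that each non-generating vertex is literally forced to equal a specific earlier vertex, not merely that the total count is $\sim n^{b+1}$. The paper's induction is self-contained and in effect re-runs that same Wigner induction with the parity-specific ranges built in from the start rather than grafted on via your compatibility step. Both arrive at the same limit.
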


\begin{proof}
 First suppose $\boldsymbol {\omega} \in \mathcal{P}(2k) \setminus SS_b(2k)$. Then from \eqref{W-S} and Lemma \ref{W-words}, is easy to see that $$\lim_{n \rightarrow \infty}\frac{| \Pi_S(\boldsymbol {\omega})|}{n^{b+1}}=0.$$ \\

\noindent From the following claim it it immediately follows that $\lim_{n \rightarrow \infty}\frac{| \Pi_S(\boldsymbol {\omega})|}{n^{b+1}}= y^{r+1},  \ \ \text{ if } \boldsymbol{\omega} \in SS_b(2k)$.\\

\noindent \textbf{Claim}: suppose $\boldsymbol {\omega}\in SS_b(2k)$ with $(r+1)$ even generating vertices. $\big|\Pi_S(\boldsymbol {\omega}) \big| = p^{r+1}n^{b-r}$.\\

\vskip3pt

\noindent \textbf{Proof of the claim}: We use induction on $b$, the number of distinct letters, to prove the claim.

When $b=1$, then $r=0$ and $\boldsymbol {\omega}$ must be a string of $a$ of length $2k$. Therefore $\pi(0)$ and $\pi(1)$ are the generating vertices and both can be chosen freely. Thus, $\big|\Pi_S(\boldsymbol {\omega}) \big|=pn$.

 Assume that for any word $\omega$ with $(b-1)$ distinct letters and $(r+1),\ (0 \leq r\leq b-2)$ even generating vertices, $\big|\Pi_S(\boldsymbol {\omega}) \big|=p^{r+1}n^{b-1-r}$. 

Now it is enough to prove that if $\boldsymbol {\omega}$ has $b$ distinct letters with $(r+1),\ (0 \leq r \leq b-1)$ even generating vertices, then $\big|\Pi_S(\boldsymbol {\omega}) \big|=p^{r+1}n^{b-r}$.

First let $0 \leq r \leq b-2$. 

Now suppose the last distinct letter of $\boldsymbol {\omega}$, say, $z$ appears for the first time at the $i-$th position, that is at $(\pi(i-1),\pi(i))$ or $(\pi(i),\pi(i-1))$ (depending on whether $i$ is odd or even). As $\boldsymbol {\omega}\in SS_b(2k)$, $z$ appears in pure even blocks. Let the length of the first pure block of $z$ be $m$ ($m$ even). Then we have the following two cases:\\
\vskip3pt

\noindent Case 1: $i$ is odd. Then we have
\begin{align}\label{even z}
& \pi(i-1)= \pi(i+1)= \cdots = \pi(i+m-1), \nonumber \\
& \pi(i)= \pi(i+2)= \cdots = \pi(i+m-2).
\end{align}
Similar identities can be shown for all other pure blocks of $z$. Hence $\pi(i)$ can be chosen freely with $1 \leq \pi(i) \leq n$ as it does not appear elsewhere in $\boldsymbol {\omega}$ other than the letter $z$. Now dropping all $z$s from $\boldsymbol{\omega}$, we get a word $\boldsymbol {\omega}^{\prime}$ with $(b-1)$ distinct letters and $(r+1)$ even generating vertices. Noting the structure of special symmetric words, $\boldsymbol {\omega}^{\prime}$ is also a special symmetric word with $(b-1)$ distinct letters. Therefore, by induction hypothesis, $\big|\Pi_S(\boldsymbol {\omega}^{\prime}) \big|=p^{r+1}n^{b-(r+1)}$. Now as $\pi(i)$ is another odd vertex that can be chosen freely, we have $\big|\Pi_S(\boldsymbol {\omega}) \big|=p^{r+1}n^{b-(r+1)}n= p^{r+1}n^{b-r}$.\\
\vskip3pt

\noindent Case 2: $i$ is even. Then we have
\begin{align*}
& \pi(i-1)= \pi(i+1)= \cdots = \pi(i+m-1)\\
& \pi(i)= \pi(i+2)= \cdots = \pi(i+m-2).
\end{align*}
  As in case 1, the generating vertex $\pi(i)$ can be chosen freely with $1 \leq \pi(i) \leq p$. Now dropping all $z$s from $\boldsymbol{\omega}$ as before, we get a word $\boldsymbol {\omega}^{\prime}$ with $(b-1)$ distinct letters and $r$ even generating vertices. Also by the structure of special symmetric words, $\boldsymbol {\omega}^{\prime}$ is a special symmetric word with $(b-1)$ distinct letters. Therefore, by induction hypothesis, $\big|\Pi_S(\boldsymbol {\omega}^{\prime}) \big|=p^{r}n^{b-r}$. Now as $\pi(i)$ is another even vertex that can be chosen freely, we have $\big|\Pi_S(\boldsymbol {\omega}) \big|=p^{r}p n^{b-r}= p^{r+1}n^{b-r}$. 
  
  \vskip3pt
  
\noindent Now let $r=b-1$. Then there are $r+1=b$ even generating vertices (one of them being $\pi(0)$) and $b$ distinct letters in $\boldsymbol{\omega}$. Therefore all letters except the first appear for the first time at even positions in $\boldsymbol{\omega}$. So, if $z$ is the last distinct letter of $\boldsymbol{\omega}$, then $z$ appears for the first time at $(\pi(i-1),\pi(i))$ where $i$ is even. Thus we have \eqref{even z} similarly as case 1 above. Hence $\pi(i)$ can be chosen freely with $1 \leq \pi(i) \leq n$. Now dropping all $z$s as before from $\boldsymbol{\omega}$, we get a word $\boldsymbol {\omega}^{\prime}$ with $(b-1)$ distinct letters and $b-2$ even generating vertices. Clearly, $\boldsymbol {\omega}^{\prime}$ is also a special symmetric word with $(b-1)$ distinct letters. Therefore, by induction hypothesis, $\big|\Pi_S(\boldsymbol {\omega}^{\prime}) \big|=p^{b-1}n^{b-(b-1)}$. Now as $\pi(i)$ is another even vertex that can be chosen freely, we have $\big|\Pi_S(\boldsymbol {\omega}) \big|=p^{b-1}n p= p^{b} n= p^{r+1}n^{b-r}$, $r=b-1$. 
 
 Hence the claim is proved. This completes the proof of the lemma. 
\end{proof}
Next, we state the following elementary result that shall be useful in the proof of Theorem \ref{res:XXt}. 
See Section 1.2 of \citep{bose2018patterned} for a proof.
\begin{lemma}\label{lem:genmoment}
Suppose $A_n$ is any sequence of symmetric random matrices such that the following conditions hold:
\begin{enumerate}
\item[(i)] For every $k\geq 1$, $\frac{1}{n}\mathbb{E}[\Tr (A_n)^k] \rightarrow \alpha_k$ as $n \rightarrow \infty$.
\item[(ii)] $\displaystyle \sum_{n=1}^{\infty}\frac{1}{n^4}\mathbb{E}[\Tr(A_n^k) \ - \ \mathbb{E}(\Tr(A_n^k))]^4  < \infty$ for every $k \geq 1$.
\item[(iii)] The sequence $\{\alpha_k\}$ is the moment sequence of a unique probability measure $\mu$.
\end{enumerate}
Then $\mu_{A_n}$ converges to $\mu$
weakly almost surely.
\end{lemma}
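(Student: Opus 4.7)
The plan is to combine the classical method of moments with a fourth-moment Borel--Cantelli argument. First I would record the identity $\int x^k \, d\mu_{A_n}(x) = \frac{1}{n}\Tr(A_n^k)$, valid because $A_n$ is symmetric with real eigenvalues, so that condition (i) is literally the statement that the $k$-th moment of the EESD $\mathbb{E}[\mu_{A_n}]$ converges to $\alpha_k$ for every $k \geq 1$.

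Next I would upgrade this to almost sure convergence of the random moments of $\mu_{A_n}$ using (ii). By Markov's inequality applied to the fourth power, for any fixed $k$ and $\varepsilon > 0$,
\begin{equation*}
\mathbb{P}\!\left(\left|\tfrac{1}{n}\Tr(A_n^k) - \tfrac{1}{n}\mathbb{E}[\Tr(A_n^k)]\right| > \varepsilon\right) \leq \frac{1}{\varepsilon^4 n^4}\, \mathbb{E}\big[\Tr(A_n^k) - \mathbb{E}\Tr(A_n^k)\big]^4.
\end{equation*}
Condition (ii) makes the right-hand side summable in $n$, so the first Borel--Cantelli lemma gives $\frac{1}{n}\Tr(A_n^k) - \frac{1}{n}\mathbb{E}[\Tr(A_n^k)] \to 0$ almost surely for each fixed $k$. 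Combined with (i), this yields $\frac{1}{n}\Tr(A_n^k) \to \alpha_k$ almost surely. Taking a countable intersection over $k \in \mathbb{N}$, there is a single probability-one event on which every moment of $\mu_{A_n}$ converges to the corresponding $\alpha_k$.

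Finally, on this event I would invoke the method of moments path-wise: the sequence $\{\alpha_k\}$ determines $\mu$ uniquely by (iii), convergence of second moments gives tightness (via Markov), and any weak subsequential limit of $\{\mu_{A_n}\}$ must have moments $\{\alpha_k\}$ and thus equal $\mu$. Consequently $\mu_{A_n} \Rightarrow \mu$ on this event, giving the almost sure weak convergence. The only delicate step is this last method-of-moments argument, because the supports of the $\mu_{A_n}$ need not lie in a fixed compact set; it is precisely to preclude escape of mass at subsequential limits that (iii) demands moment-uniqueness of $\mu$ rather than mere existence.
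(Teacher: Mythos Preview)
Your argument is correct and is exactly the standard moment-method proof: use the trace identity to interpret (i) as convergence of expected moments, apply Markov's inequality with the fourth-moment bound (ii) and Borel--Cantelli to upgrade to almost sure convergence of each random moment, intersect over $k$, and then invoke the method of moments via (iii). The paper does not supply its own proof of this lemma but simply cites Section~1.2 of \citep{bose2018patterned}; the argument there is precisely the one you have outlined.
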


Condition (ii) of Lemma \ref{lem:genmoment} will be referred to as the \textit{fourth moment condition}. The following lemma for Wigner matrices will be important in proving the fourth moment condition for the $S-$matrix.
\begin{lemma}[Lemma 4.2 in \citep{bose2021random}]\label{lem:moment}
For the Wigner link function , let 
\begin{align*}
Q_{k,4}^b = | \{& (\pi_1,\pi_2,\pi_3,\pi_4): \ell(\pi_i)=2k; \pi_i, 1 \leq i \leq 4 \ \text{jointly- and cross-matched with }\\
 & b \text{ distinct edges or } b \text{ distinct letters across all } (\pi_i)_{1\leq i \leq 4}\}|.
\end{align*}
 Then there exists a constant C, such that,
\begin{equation}
Q_{k,4}^b \leq  C \ n^{b +2}\ .
\end{equation} 
\end{lemma}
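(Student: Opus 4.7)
The plan is to split the count $Q_{k,4}^b$ into a choice of combinatorial ``shape'' followed by a choice of vertex assignment. Here a shape records, for the concatenated sequence of $8k$ positions across the four circuits, which positions must carry the same Wigner edge; it also specifies, within each circuit, the word structure. The number of shapes compatible with jointly- and cross-matched $b$-edge quadruples is bounded by a constant $C_k$ depending only on $k$, so it suffices to prove that, for every such shape, the number of admissible $(\pi_1,\pi_2,\pi_3,\pi_4)$ realizing it is at most $n^{b+2}$.

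To bound this number for a fixed shape, I would build the multigraph $G$ whose vertex set is the set of distinct values taken by the $\pi_j(i)$ and whose edge set is the set of $b$ distinct Wigner edges. Let $V=|V(G)|$ and let $c$ be its number of connected components. Each component with $v$ vertices must contain at least $v-1$ edges, so summing over components gives $V-c\leq b$, i.e., $V\leq b+c$. Once the shape is fixed, each admissible quadruple is determined by an injection from $V(G)$ into $\{1,\ldots,n\}$, yielding at most $(n)_V\leq n^{V}\leq n^{b+c}$ possibilities.

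The main obstacle is therefore to show that $c\leq 2$, and this is where both the jointly-matched and the cross-matched hypotheses enter. Each circuit $\pi_i$ is a single closed walk in $G$, so all of its edges lie in a single connected component; this defines a map $\phi:\{1,2,3,4\}\to\{\text{components of }G\}$. Since every vertex of $G$ is traversed by some circuit by definition, $\phi$ is surjective. Cross-matchedness says each $\pi_i$ shares an edge with some $\pi_j$ ($j\neq i$), forcing $\phi(i)=\phi(j)$; hence no fiber of $\phi$ is a singleton. With four circuits distributed into fibers of size at least two, the image has at most two elements, so $c\leq 2$. Combining gives $V\leq b+2$, and summing over the $C_k$ shapes yields $Q_{k,4}^b\leq C_k\,n^{b+2}$, completing the argument. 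A secondary routine check is that the Wigner identification of ordered pairs with unordered edges does not affect the bound $V-c\leq b$, which it does not since self-loops only inflate $b$ without decreasing $c$.
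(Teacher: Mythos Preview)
This lemma is not proved in the paper at all; it is quoted verbatim as Lemma~4.2 of \citep{bose2021random} and used as a black box, with the only added remark being that the same argument carries over to the $S$-link function since $p$ and $n$ are comparable. So there is no in-paper proof to compare your attempt against.

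Your connected-component argument is the standard route and is essentially correct, but one point deserves tightening. As you define it, a ``shape'' records only the edge partition of the $8k$ positions, yet the multigraph $G$---and in particular $V=|V(G)|$---depends on the finer data of which vertex positions $\pi_j(i)$ coincide, and that is \emph{not} determined by the edge partition alone (for the Wigner link a repeated edge may be realised as a loop or a non-loop, changing $V$). Consequently the sentence ``each admissible quadruple is determined by an injection from $V(G)$ into $\{1,\ldots,n\}$'' is circular, since $V(G)$ varies over quadruples with the same shape. The fix is routine: refine the shape to also record the partition of the $8k$ vertex positions into coincidence classes. The number of refined shapes is still bounded by a constant depending only on $k$; each refined shape now determines an abstract graph on $V$ vertex classes with exactly $b$ distinct edges, so your inequalities $V-c\le b$ and $c\le 2$ (the latter from cross-matching, exactly as you argue) apply directly; and the number of quadruples realising a fixed refined shape is precisely $(n)_V\le n^{b+2}$. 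With that adjustment your proof is complete.
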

The arguments in proof of Lemma 5.4 can be used to prove the same for the $S-$ link function as $1 \leq \pi(2i)\leq p$ and $1 \leq \pi(2i-1)\leq n$ and $p$ and $n$ are comparable for large $n$.\\

The next lemma is a well-known result that is useful in the proof of the theorem. For a proof of the lemma see Corollary A.42 in \citep{bai2010spectral}.
\begin{lemma}\label{lem:metric}
Suppose $A$ and $B$ are real $p \times n$ matrices and $F^{S_A}$ and $F^{S_B}$ denote the ESDs of $AA^T$ and $BB^T$ respectively. Then the \textit{Levy distance}, $L$ between the distributions $F^{S_A}$ and $F^{S_B}$ satisfy the following inequality: 
\begin{equation}\label{levy}
L^4(F^A,F^B)\leq \frac{2}{p^2}(\Tr(AA^T+BB^T))(\Tr[(A-B)(A-B)^T]).
\end{equation}
\end{lemma}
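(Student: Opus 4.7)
I would derive the fourth-power bound in \eqref{levy} by combining three classical ingredients: a L\'evy-versus-$W_1$-Wasserstein comparison, a Cauchy--Schwarz factorisation, and Mirsky's singular-value analog of the Hoffman--Wielandt inequality.

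\emph{Step 1 (L\'evy to $W_1$).} The key metric inequality is $L^2(F,G)\le W_1(F,G)$ for any univariate cdfs $F,G$, which follows from a short Markov-type argument: if a coupling $(X,Y)$ with $X\sim F$, $Y\sim G$ satisfies $\pr(|X-Y|\ge \eps)\le \delta$, then $F(x-\eps)-\delta\le G(x)\le F(x+\eps)+\delta$ for every $x$, so $L(F,G)\le \max(\eps,\delta)$; the choice $\eps=\delta=\sqrt{\E|X-Y|}$ followed by minimisation over couplings yields $L^2\le W_1$. Specialising to the ESDs of the $p\times p$ matrices $AA^T$ and $BB^T$ and using the sorted coupling of eigenvalues gives
\begin{equation*}
L^2(F^{S_A}, F^{S_B}) \;\le\; \frac{1}{p}\sum_{i=1}^{p}\bigl|\lambda_i(AA^T)-\lambda_i(BB^T)\bigr|.
\end{equation*}

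\emph{Step 2 (Factor and bound).} I would write $\lambda_i(AA^T)=\sigma_i(A)^2$ and $\lambda_i(BB^T)=\sigma_i(B)^2$ in terms of the singular values (padded with zeros to length $p$ if $n<p$), factor
\begin{equation*}
\bigl|\sigma_i(A)^2-\sigma_i(B)^2\bigr|=\bigl|\sigma_i(A)-\sigma_i(B)\bigr|\cdot\bigl(\sigma_i(A)+\sigma_i(B)\bigr),
\end{equation*}
and apply Cauchy--Schwarz over $i$:
\begin{equation*}
\sum_i\bigl|\sigma_i(A)^2-\sigma_i(B)^2\bigr|\;\le\;\Bigl(\sum_i(\sigma_i(A)-\sigma_i(B))^2\Bigr)^{1/2}\Bigl(\sum_i(\sigma_i(A)+\sigma_i(B))^2\Bigr)^{1/2}.
\end{equation*}
Mirsky's inequality (the Hoffman--Wielandt analog for singular values) bounds the first factor by $\|A-B\|_F^2=\Tr((A-B)(A-B)^T)$, while the elementary $(a+b)^2\le 2(a^2+b^2)$ applied termwise bounds the second by $2(\|A\|_F^2+\|B\|_F^2)=2\Tr(AA^T+BB^T)$. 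Substituting into the $L^2$ bound from Step~1 and squaring both sides delivers exactly \eqref{levy}.

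\emph{Main obstacle.} The only ingredient that requires genuine matrix theory is Mirsky's singular-value inequality, whose proof goes through a majorisation/rearrangement argument; however it is entirely classical and can be black-boxed here. Everything else is a packaging of elementary inequalities. A minor technical point is the zero-padding of singular values to length $p$, which is harmless because it does not affect any of the traces on the right-hand side of \eqref{levy}.
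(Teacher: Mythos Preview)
Your argument is correct. The paper itself does not supply a proof of this lemma; it simply cites Corollary~A.42 of Bai and Silverstein's monograph. The route you take---the bound $L^{2}\le W_{1}$, the sorted-eigenvalue coupling, the factorisation $|\sigma_{i}(A)^{2}-\sigma_{i}(B)^{2}|=|\sigma_{i}(A)-\sigma_{i}(B)|(\sigma_{i}(A)+\sigma_{i}(B))$, Cauchy--Schwarz, Mirsky's singular-value perturbation inequality, and the elementary $(a+b)^{2}\le 2(a^{2}+b^{2})$---is precisely the standard textbook derivation of this bound, and in particular is the argument behind the cited corollary. So there is no genuine methodological difference to compare: you have reconstructed the proof that the paper black-boxes.
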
	
\begin{remark}\label{convergence}
It is well-known that for a sequence of probability measure $\{\mu_n\}$ and a probability measure $\mu$, $L^4(\mu_n,\mu) \rightarrow 0$ as $n \rightarrow \infty$, implies $\mu_n$ converges weakly to $\mu$.
\end{remark}

\subsection{Proof of Theorem \ref{res:XXt} and Corollary \ref{p=n}}

\begin{proof}[\textit{Proof of Theorem \ref{res:XXt} }]
 To prove (a), we make use of Lemma \ref{lem:genmoment} and use the notion of words and circuits in order to calculate the moments. 

We break the proof  into a few steps.

\vskip5pt
\noindent\textbf{Step 1:} We reduce the general case to the case where all the entries of $Z_p$ have mean 0. For this, consider the matrix $\widetilde{Z}_p$ whose entries are $(y_{ij}-\mathbb{E}y_{ij})$ and thus have mean 0. Now
\begin{align}\label{meanzero-noiid}
n\ \mathbb{E}[(y_{ij}- \mathbb{E} y_{ij})^{2k}]= n\ \mathbb{E}[y_{ij}^{2k}] + n \displaystyle \sum_{t=0}^{2k-1} {{2k} \choose {t}}\mathbb{E}[y_{ij}^{t}]\ (\mathbb{E}y_{ij})^{2k-t}.  
\end{align}
The first term of the r.h.s. equals $g_{2k}(i/p,j/n)$ by \eqref{gkeven}. The second term is tackled as follows:
\begin{align*}
\text{For } t\neq {2k-1}, \ \ n \ \mathbb{E} [y_{ij}^{t}]\ (\mathbb{E}y_{ij})^{2k-t}& = (n^{\frac{1}{2k-t}}\ \mathbb{E}y_{ij})^{2k-t} \ \mathbb{E}[y_{ij}^{t}]\\
&\overset{n \rightarrow \infty}{\longrightarrow} 0, \ \ \ \text{ by condition } \eqref{gkodd}.  
\end{align*}
\begin{align*}
\text{For } t={2k-1}, \ \ n \ \mathbb{E} [y_{ij}^{2k-1}]\ \mathbb{E}y_{ij}& = (\sqrt{n} \ \mathbb{E} [y_{ij}^{2k-1}]) \ (\sqrt{n}\  \mathbb{E}y_{ij})\\
&\overset{n \rightarrow \infty}{\longrightarrow} 0, \ \ \ \text{ by condition } \eqref{gkodd}.
\end{align*}
Hence from \eqref{meanzero-noiid}, we see condition \eqref{gkeven} is true for the matrix $\widetilde{Z}_p$. Similarly we can show that \eqref{gkodd} is true for $\widetilde{Z}_p$. Hence, Assumption A holds for the matrix $\widetilde{Z}_p$.

Now from Lemma \ref{lem:metric}, 
\begin{align*}
L^4\big(F^{S_{Z_p}}, F^{S_{\widetilde{Z}_p}}\big) & \leq  \frac{2}{p^2}(\Tr(Z_pZ_p^T+\widetilde{Z}_p\widetilde{Z}_p^T))(\Tr[(Z_p-\widetilde{Z}_p)(Z_p-\widetilde{Z}_p)^T]) \\
&\leq \frac{2}{p^2}\bigg(\displaystyle \sum_{i,j}\big( 2y_{ij}^2+ (\mathbb{E}y_{ij})^2- 2y_{ij}\mathbb{E}y_{ij}\big)\bigg)\bigg(\sum_{i,j}(\mathbb{E}y_{ij})^2\bigg)
\end{align*}
The second factor of the rhs in the above inequality is bounded by $$n (\sup_{i,j}\mathbb{E}y_{ij})^2=(\sup_{i,j}\sqrt{n}\mathbb{E}y_{ij})^2\rightarrow 0  \ \ \text{ as } n \rightarrow \infty \ \ \text{ by } \eqref{gkodd}. $$ Now it can be seen applying Borel-Cantelli lemma that $\displaystyle \sum_{i,j} (y_{ij}^2-\mathbb{E}[y_{ij}^2] \rightarrow 0$ almost surely as $p \rightarrow \infty$ (proof is given in Section \ref{appendix}). Also $\mathbb{E}\big[\frac{1}{p}\sum_{ij}y_{ij}^2\big] \rightarrow \int_{[0,1]^2} g_2(x,y)\ dx\ dy$. Hence, $$\mathbb{P}\big[\{\omega; \limsup_p\frac{1}{p}\displaystyle \sum_{i,j}y_{ij}^2(\omega)= \infty\}\big]=0.$$ Therefore the first term of the rhs in the inequality also tends to zero almost surely.
From this observation and Remark \ref{convergence},  the LSD of $S_{Z_p}$ and $S_{\widetilde{Z}_p}$ are same. Thus we can assume that the entries of $Z_p$ have mean 0.

To prove the first part of the theorem, we shall use Lemma \ref{lem:genmoment}. We will verify the conditions (i), (ii) and (iii) of the lemma using Assumption A and a few other observations made earlier.  
\vskip5pt
\noindent\textbf{Step 2:} We verify condition (ii) of Lemma \ref{lem:genmoment} for $S_{Z_p}$ in this step.
 Observe that 
 \begin{align}\label{fourthmoment-XXt}
 \frac{1}{p^{4}} \mathbb{E}[\Tr(Z_pZ_p^T)^k \ - \ \mathbb{E}(\Tr(Z_pZ_p^T)^k)]^4\ = \frac{1}{p^4} \displaystyle \sum_{\pi_1,\pi_2,\pi_3,\pi_4} \mathbb{E}[\displaystyle \Pi_{i=1}^4 (Y_{\pi_i}\ - \ \mathbb{E}Y_{\pi_i})].
 \end{align}
 If $(\pi_1,\pi_2,\pi_3,\pi_4)$ are not jointly-matched, then one of the circuits has a letter that does not appear elsewhere. 
	Hence by independence and mean zero assumption, $\mathbb{E}[\displaystyle \Pi_{i=1}^4 (Y_{\pi_i}\ - \ \mathbb{E}Y_{\pi_i})]=0$.
  If $(\pi_1,\pi_2,\pi_3,\pi_4)$ are not cross-matched, then one of the circuits say $\pi_j$ is only self-matched. Then we have $\mathbb{E}[Y_{\pi_j}\ - \ \mathbb{E}Y_{\pi_j}]=0$. So again we have  $\mathbb{E}[\displaystyle \Pi_{i=1}^4 (Y_{\pi_i}\ - \ \mathbb{E}Y_{\pi_i})]=0$.

So we consider only circuits $(\pi_1,\pi_2,\pi_3,\pi_4)$ that are jointly- and cross-matched. Here each circuit is of length $2k$, so the total number of edges($L-$ values) is $8k$. As the circuits are at least pair-matched, the number of distinct edges is at most $4k$. 

Suppose $\pi_i$ has $k_i$ distinct letters, $1\leq i \leq 4$ with $k_1+k_2+k_3+k_4=b$. Suppose the $j$th distinct letter appears $s_j$ times across $\pi_1,\pi_2,\pi_3,\pi_4$ and first at the $i_j-$th position.
Let $b_1$ and $b_2$ ($b_1+b_2=b$) be respectively the number of even and odd $s_i$'s, denoted by $s_{i_1},s_{i_2},\ldots,s_{i_{b_1}}$and $s_{i_{b_1+1}},s_{i_{b_1+2}},\ldots,s_{i_{b_2}}$.  Each term can then be written as 
\begin{align*}
 \frac{1}{p^4}  \displaystyle \sum_{b=1}^{4k} p^{-{b_1}} p^{-(b_2-\frac{1}{2})}
 \prod_{j=1}^{b_1} \  g_{s_{i_j},n}(\pi(i_j-1)/p,\pi(i_j)/n) \ \prod_{m=b_1+1}^{b_1+b_2} n^{\frac{b_2-(1-1/2)}{b_2}} \mathbb{E}[y_{\pi(i_{m}-1)\pi(i_{m})}^{s_{i_m}}]. 
 \end{align*}

We note that $g_{s_{i_j},n}   \rightarrow  g_{s_{i_j}}$  for all $1\leq j \leq b_1$. Therefore, the sequence $\|g_{s_{i_j},n}\|$ is bounded by  a constant $M_{j}$. Also as $\frac{b_2-(1-1/2)}{b_2}<1$, by \eqref{gkodd}, we have $n^{\frac{b_2-(1-1/2)}{b_2}} \mathbb{E}[y_{\pi(i_{m}-1)\pi(i_{m})}^{s_{i_m}}]  $ is bounded by $1$ for $n$ large when $b_1+1\leq m \leq b_1+b_2$. Let
$$M^{\prime} = \underset{b_1+b_2=b}{\max} \{M_{t},1:  1 \leq t \leq b_1 \}\ \mbox{ and }\ M_0^{\prime}=  \max \{{M^{\prime}}^b:  1 \leq b \leq 2k \}. 
$$
Now using \eqref{W-S} and Lemma \ref{lem:moment}, we can say that the number of such circuits having $b$ distinct letters ($b=1,\ldots, k$) is bounded by $C_1n^{b+2}$ for some constant $C_1>0$. Therefore we have with $C_2=y^{b+2}C_1$,
 \begin{align*}
\frac{1}{p^{4}} \mathbb{E}[\Tr(Z_pZ_p^T)^k \ - \ \mathbb{E}(\Tr(Z_pZ_p^T)^k)]^4\
& \leq C_2 M_0^{\prime} \displaystyle \sum_{b=1}^{4k} \frac{1}{p^{b+3\frac{1}{2}}} p^{b+2} 
= \mathcal{O}(p^{-\frac{3}{2}}).
\end{align*}

This completes  the  proof of Step 2.\\

\vskip3pt

\noindent  By Lemma \ref{lem:genmoment} and the previous step, it is now enough to show  that for every $k\geq 1$, \\
   $\displaystyle \lim_{n\to\infty}\frac{1}{n}\mathbb{E}[\Tr(Z_n)^{k}]$ exists and is given by $\beta_k( \mu^{\prime})$ for each $k \geq 1$.\\

\vskip5pt

\noindent\textbf{Step 3:} We verify condition, (i) of Lemma \ref{lem:genmoment} for $Z_p$ in this step. 

First note that, we can write \eqref{genmoment-XXt} as \begin{align}\label{moment-XXt1}
\lim_{n \rightarrow \infty}\frac{1}{p}\mathbb{E}[\Tr(Z_pZ_p^*)^k]
 &=   \displaystyle\lim_{n \rightarrow \infty} \displaystyle \sum_{b=1}^k  \Big[\frac{1}{p} \sum_{\omega \in SS_b(2k)} \sum_{\pi \in \Pi(\boldsymbol{\omega})}\ \mathbb{E}(Y_{\pi}) + \frac{1}{n} \sum_{\underset{\boldsymbol {\omega} \text{ with b letters}}{\omega \notin SS(2k)}}  \sum_{\pi \in \Pi(\boldsymbol{\omega})}\ \mathbb{E}(Y_{\pi})\Big] .\nonumber \\
& = T_1+T_2.
\end{align}
Suppose that $\boldsymbol {\omega}$ has $b$ distinct letters and let $\pi \in \Pi_S(\boldsymbol {\omega})$. Suppose the first appearance of the letters of $\boldsymbol {\omega}$ are at the $i_1,i_2, \ldots,i_b$ positions. So the $j$th new letter appears at the $(\pi(i_j-1),\pi(i_j))-$th position for the first time. Let $D$ denote the set of all distinct generating vertices. Thus $|D|\leq (b+1)$.

If $\boldsymbol {\omega}$ is a word with $b$ distinct letters that does not belong to $SS(2k)$, then the contribution of  $\boldsymbol {\omega}$ to $T_2$ in the above sum is $0$. Indeed, from Lemma \ref{S-words}, it follows that for $\boldsymbol {\omega} \notin  SS_b(2k)$, $|D|\leq b$ and hence $\boldsymbol {\omega}$ has no contribution. Hence $T_2$ has no contribution in \eqref{moment-XXt1}.

Now suppose $\boldsymbol {\omega} \in SS_b(2k)$ with $(r+1)$ even generating vertices. Clearly, by Lemma \ref{S-words}, $\boldsymbol {\omega}$ has $(b+1)$ distinct generating vertices. For each $j \in \{1,2,\ldots , b\}$ denote $(\pi(i_j-1),\pi(i_j))$ as $(t_j,l_j)$. Clearly $t_1=\pi(0)$ and $l_1=\pi(1)$. It is easy to see that each distinct $(t_j,l_j)$ corresponds to each distinct letter in $\boldsymbol {\omega}$. Suppose the $j$th new letter appears $s_j$ times in $\boldsymbol {\omega}$. Clearly all the $s_j$ are even. So the total contribution of this $\boldsymbol {\omega}$ to $T_1$ in \eqref{moment-XXt1} is: 
\begin{align}\label{finitesum-XXt}
\frac{1}{pn^b} \displaystyle \sum_{S} \prod_{j=1}^b g_{s_j,n}(t_j/p,l_j/n)
\end{align} 
Recall that there are $(r+1)$ even generating vertices in $D$ with range between $1$ and $p$ and $(b-r)$ vertices (odd generating) with range between $1$ and $n$. So as $n \rightarrow \infty$, \eqref{finitesum-XXt} converges to
\begin{equation}\label{ss2k-limit}
y^r \int_{[0,1]^{b+1}} \displaystyle \prod_{j=1}^b g_{k_j}(x_{t_j},x_{l_j})\  dx_S
\end{equation}
where $dx_S= \prod_{i \in S} dx_{i}$ denotes the $(b+1)-$dimensional Lebesgue measure on $[0,1]^{b+1}$.

Hence we obtain
\begin{align}\label{moment-XXt}
\displaystyle \lim_{p \rightarrow \infty} \frac{1}{p}\mathbb{E}[\Tr S_p^k]= \sum_{b=1}^k\sum_{r=0}^{b-1} \sum_{\underset{\underset{\text{ even generating vertices}}{\text{having }(r+1)}}{\pi \in SS_b(2k)}} y^r \int_{[0,1]^{b+1}} \displaystyle \prod_{j=1}^b g_{k_j}(x_{t_j},x_{l_j})\  dx_S.
 \end{align}
 
This completes the verification of the first moment condition.
 \vskip5pt
\noindent \textbf{Step 4:} We prove the uniqueness of the measure in this step. We have obtained 
\begin{align*}
\gamma_{2k}= &\lim_{p \rightarrow \infty}\frac{1}{p}\mathbb{E}[\Tr(S_p)^{2k}]\leq \sum_{b=1}^k\sum_{r=0}^{b-1} \sum_{\underset{\underset{\text{ even generating vertices}}{\text{having }(r+1)}}{\sigma \in SS_b(2k)}} y^r M_{\sigma}
\end{align*}
Let $c=\max(y,1)$. Then 
\begin{align*}
\gamma_{2k} \leq \displaystyle \sum_{\sigma \in SS(2k)} c^{k}M_{\sigma}
 \leq \displaystyle \sum_{\sigma \in \mathcal{P}(2k)} c^k M_{\sigma} 
 =c^k \alpha_{2k}.
\end{align*}
 As $\{\alpha_{2k}\}$ satisfies Carleman's condition, $\{\gamma_{2k}\}$ also does so. Now using Lemma \ref{lem:genmoment}, we see that there exists a measure $ \mu$ with  moments $\{\beta_k(\mu)=\gamma_{2k}\}_{k \geq 1}$ such that $\mu_{S_{Z_p}}$ converges weakly almost surely to $ \mu$.

 This completes the proof of part (a).
\vskip5pt
 
\noindent\textbf{Step 5:} In this final step we prove part (b) of the theorem. Observe that from Lemma \ref{lem:metric}, we have 
\begin{align}\label{levy-XXt}
L^4(F^{S_p},F^{S_{Z_p}}) &\leq \frac{2}{p^2}(\Tr(X_pX_p^T+Z_pZ_p^T))(\Tr[(X_p-Z_p)(X_p-Z_p)^T]) \nonumber\\
& = \frac{2}{p} \bigg(\displaystyle2 \sum_{i,j}y_{ij}^2+\sum_{ij}x_{ij}^2\boldsymbol{1}_{[|x_{ij}|>t_n]} \bigg)\bigg(\frac{1}{p}\sum_{ij}x_{ij}^2\boldsymbol{1}_{[|x_{ij}|>t_n]}\bigg)
\end{align}
The second factor in the above equation tends to zero almost surely (or in probability) as $n \rightarrow \infty$ due to the assumptions $\frac{1}{n} \sum_{i,j} \ x_{ij}^2\boldsymbol {1}_{\{|x_{ij}| > t_n\}} \rightarrow 0, \ \mbox{almost surely (or in probability)}$ and $p/n\rightarrow y \in (0,\infty)$. Now as $\displaystyle\frac{1}{p} \sum_{i,j} (y_{ij}^2-\mathbb{E}[y_{ij}^2]) \rightarrow 0 \ \ \ \text{ almost surely as }$ (see Section \ref{appendix}) and $\mathbb{E}\big[\frac{1}{p}\sum_{ij}y_{ij}^2\big] \rightarrow \int_{[0,1]^2} g_2(x,y)\ dx\ dy$, and hence is finite, $\frac{1}{p}\sum_{i,j}y_{ij}^2$ is bounded almost surely. 
 Therefore the first factor in \eqref{levy-XXt} also tends to zero either almost surely or in probability.

 Therefore from the discussion in the above paragraph and Reamrk \ref{convergence}, we get that the ESD of $X_pX_p^T$ converges weakly to the probability measure $\mu$ almost surely (or in probability).
This completes the proof of the theorem. 
\end{proof}
\begin{proof}[\textbf{Proof of Corollary \ref{p=n}}]
In the case $p=n$, if $\{g_{2k,n}\}$ are symmetric functions, then the assumption on the entries of $X_p$ are no different from that on the entries of $W_n$ in Theorem 2.1 of \citep{bose2021random}. Now from \eqref{moment-XXt} and equation (4.11) in \citep{bose2021random} we see that $\mathbb{E}[X^k]=\mathbb{E}[Y^{2k}], k \geq 1$. Hoever observe that even if $\{g_{2k,n}\}$ are not symmetric for every $n$, but $\{g_{2k}\}$ are symmetric functions $\mathbb{E}[X^k]=\mathbb{E}[Y^{2k}], k \geq 1$ still holds. Therefore, by the uniqueness of the probability distribution via moments, we have $X \overset{\mathcal{D}}{=} Y^2$.
\end{proof}

\begin{proof}[Proof of Remark \ref{unbounded support}]
Consider $k=mt$ for some $t \geq 1$. Then from \eqref{moment-XXt}, we have that 
\begin{align}\label{moment-unbounded support1}
\beta_{k}(\mu)= \sum_{b=1}^k\sum_{r=0}^{b-1} \sum_{\underset{\underset{\text{ even generating vertices}}{\text{having }(r+1)}}{\pi \in SS_b(2k)}} y^r \int_{[0,1]^{b+1}} \displaystyle \prod_{j=1}^b g_{k_j}(x_{t_j},x_{l_j})\  dx_S.
\end{align}
Recall that $\pi$ in the above expression could be described as a word in $SS_{b}(2k)$ having $(r+1)$ even generating vertices. Now consider all words $\omega$ with $t$ distinct letters such that each letter appears $2m$ times and also in pure even blocks in $\omega$. Clearly $\omega\in SS_t(2k)$ with only one even generating vertex $\pi(0)$. Therefore as $n \rightarrow \infty$, the contribution of such $\omega$ in the limiting moment is as follows (see \eqref{ss2k-limit}):
\begin{equation}\label{ss2k-unbounded}
\int_{[0,1]^{t+1}} g_{2m}(x_0,x_1)g_{2m}(x_0,x_2)\cdots g_{2m}(x_0,x_t) \ dx_0dx_1\cdots dx_t= \int_{[0,1]} \big(f_{2m}(x_0)\big)^t \ dx_0 .
\end{equation}
Next, observe that the number of words that fits the description given in the above paragraph (words such as $\omega$ above) is
\begin{equation}\label{number of such words}
\frac{1}{t!} {{mt} \choose m}{{mt-t} \choose m}\cdots {m \choose m}= \frac{1}{t!}\frac{(mt)!}{(m!)^t}. 
\end{equation}
Now as the integrand in the \eqref{moment-unbounded support1} is non-negative for all special symmetric words, using \eqref{ss2k-unbounded} and \eqref{number of such words}, we have
\begin{align*} 
\beta_{k}(\mu)& > \frac{1}{t!}\frac{(mt)!}{(m!)^t} \int_{[0,1]} \big(f_{2m}(x_0)\big)^t \ dx_0 . \nonumber\\
& = \frac{(mt)!}{t!} \int_{[0,1]}\bigg(\frac{f_{2m}(x_0)}{m!}\bigg)^t \ dx_0 > c \frac{(mt)!}{t!} , k=mt.
\end{align*}
Therefor for $t$ sufficiently large (with $k=mt$), 
\begin{equation*}
(\beta_k(\mu))^{1/k}> K \ t^{\eta} \ \ \text{ for some constant } K>0 \  \text{ and } \eta>0.
\end{equation*}
Therefore $(\beta_k(\mu))^{1/k} \rightarrow \infty$ as $k=mt \rightarrow \infty$. Hence $\mu$ has unbounded support.
\end{proof}

\noindent \textbf{Moments of the variance profile matrices}: Now we give a description of the limiting moments of the S-matrices with variance profile. Observe that, from Step 3 in the proof of Theorem \ref{res:XXt}, for each word in $SS_{b}(2k)$ with $(r+1)$ even genrating vertices and each distinct letter appearing $s_1,s_2,\ldots,s_b$ times, its contribution to the limiting moments is (see \eqref{ss2k-limit})
$$y^r\int_{[0,1]^{b+1}} \prod_{j=1}^b \sigma^{s_j}(x_{t_j},x_{l_j}) \ \prod_{i\in S} dx_i \prod_{j=1}^b C_{s_j},$$
where $(t_j,l_j)$ denotes the position of first appearance of the $j$th distinct letter in the word.

Hence the $k$th moment of $\nu$ is
\begin{align*}
\beta_{k}(\nu)= \sum_{b=1}^k\sum_{r=0}^{b-1} \sum_{\underset{\underset{\text{ even generating vertices}}{\text{having }(r+1)}}{\pi \in SS_b(2k)}} y^r \int_{[0,1]^{b+1}} \displaystyle \prod_{j=1}^b \sigma^{s_j}(x_{t_j},x_{l_j}) dx_i \prod_{j=1}^b C_{s_j}.
\end{align*}

\section{Hypergraphs, Noiry-words and $SS(2k)$}\label{connection-ss2k}
In Section \ref{triangular iid}, we discussed about entries that have triangular iid distribution and we found how we can conclude their convergence using Thoerem \ref{res:XXt}. We also described the limiting moments via $SS(2k)$ partitions. Here, we show how the moments that we have obtained are the same with those obtained in \citep{Benaych-Georges2012} and \citep{noiry2018spectral}. 

\begin{definition}
Let $G$ be a graph with vertex set $V$ and let $\pi$ be a partition of $V$ and $\tau$ be a partition of the edge set. Then $H(\pi,\tau)$  is defined to be the hypergraph with vertex set as $G_{\pi}$ (i.e. $\pi$) and edges $\{E_W; W\in \tau\}$, where each edge $E_W$ is the set of blocks $J \in \pi$ such that at least one edge of $G_{\pi}$ starting or ending at $J$ belongs to $W$.
Further if no two of the edges can have more than one common vertex, then $H(\pi,\tau)$ is said to be a \textit{hypergraph with no cycle}.
\end{definition}
In \citep{Benaych-Georges2012}, equation (22) describes the limiting moments as a sum on \textit{Hypergraphs with no cycle}. For details on Hypergraphs, see Sections 5.3 and 12.3.2 in \citep{Benaych-Georges2012} and \citep{berge1984hypergraphs}. 
\begin{lemma}\label{hypergraph-ss2k}
For every word $\boldsymbol{\omega} \in SS_b(2k)$, there exists partitions $\pi,\tau \in \mathcal{P}(k)$ such that there is a unique hypergraph $H(\pi,\tau)$ which has no cycle with $|\pi|+|\tau|=b+1$. The converse is also true.
\end{lemma}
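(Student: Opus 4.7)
The plan is to take $\pi$ and $\tau$ to be the partitions of the even indexed and odd indexed positions of a canonical circuit associated to $\omega$, and then to show that the König bipartite incidence graph of the hypergraph $H(\pi,\tau)$ coincides with the bipartite multigraph of distinct $S$-edges of that circuit. Berge-acyclicity of $H(\pi,\tau)$ then reduces to the bipartite graph being a tree, which follows from the balance $|\pi|+|\tau|=b+1$ together with connectedness of the circuit.

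\textbf{Forward direction.} Given $\omega\in SS_b(2k)$, fix a circuit $c\in \Pi_S(\omega)$ whose $b+1$ generating vertices take pairwise distinct values. Define $\pi\in\mathcal{P}(k)$ by $i\sim_\pi j\ \Leftrightarrow\ c(2i-2)=c(2j-2)$ and $\tau\in\mathcal{P}(k)$ by $i\sim_\tau j\ \Leftrightarrow\ c(2i-1)=c(2j-1)$. By the claim inside the proof of Lemma \ref{S-words}, $\omega$ has $r+1$ even and $b-r$ odd generating vertices, so $|\pi|=r+1$ and $|\tau|=b-r$, giving $|\pi|+|\tau|=b+1$. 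Construct the bipartite multigraph $B$ with vertex classes equal to the blocks of $\pi$ and $\tau$, with an edge for every position $i\in[2k]$ joining the $\pi$-block of the adjacent even position to the $\tau$-block of the adjacent odd position. Two of these edges are parallel precisely when $\omega$ assigns the corresponding positions the same letter, so after merging parallels $B$ has exactly $b$ distinct edges. Since $c$ visits every generating vertex, $B$ is connected, and a connected bipartite graph on $b+1$ vertices with $b$ edges is a tree. The bipartite graph $B$ is precisely the König incidence graph of $H(\pi,\tau)$, and a hypergraph is Berge-acyclic iff its König graph is a forest, so $H(\pi,\tau)$ has no cycle. Uniqueness of the hypergraph given $\pi,\tau$ is automatic from its definition.

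\textbf{Converse direction.} Suppose $\pi,\tau\in\mathcal{P}(k)$ satisfy $|\pi|+|\tau|=b+1$ and $H(\pi,\tau)$ has no cycle. Pick pairwise distinct labels in $\{1,\dots,p\}$ for the blocks of $\pi$ and in $\{1,\dots,n\}$ for the blocks of $\tau$, and define a circuit $c$ of length $2k$ by letting $c(2i-2)$ be the label of the block of $\pi$ containing $i$ and $c(2i-1)$ the label of the block of $\tau$ containing $i$, for $i=1,\dots,k$, with $c(2k)=c(0)$. Let $\omega$ be the $S$-word of $c$. The number of blocks of $\omega$ equals the number of distinct $S$-edges in $c$, which equals the number of König edges of $H(\pi,\tau)$; since the König graph is connected, cycle-free, and has $b+1$ vertices, it is a tree with $b$ edges, so $\omega$ has $b$ blocks.

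\textbf{Main obstacle.} The most delicate step is verifying that the word $\omega$ produced in the converse lies in $SS_b(2k)$. This hinges on the tree property of the König graph $B$. Whenever $c$ traverses the same $S$-edge $e$ at two positions $i<j$, the subwalk between them is a closed walk based at an endpoint of $e$ on the subtree obtained by deleting $e$; such a closed walk on a tree has even length, uses every other edge an equal number of times in each direction, and consequently interleaves any other letter an equal number of times at odd and at even positions. This delivers condition (ii) of Definition \ref{ss(2k)}. Condition (i) follows by applying the same reasoning to the last freshly visited leaf edge: once the walk enters that leaf for the first time, it can only oscillate back and forth on that leaf edge until leaving, producing a pure even run of consecutive positions. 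Making this tree-walk analysis quantitative, and checking that the special symmetric property is independent of the choice of labeling used to define $c$, is the main combinatorial obstacle; the balancing identity $|\pi|+|\tau|=b+1$ is what forces $B$ to be a tree and hence rules out the cycles that would destroy the special symmetric property.
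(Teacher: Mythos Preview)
Your approach is correct but genuinely different from the paper's. The paper leans almost entirely on Lemma~\ref{S-words}: in the forward direction it argues by contradiction that a cycle in $H(\sigma,\tau)$ would force two generating vertices of some letter to coincide with generating vertices of earlier letters, contradicting the fact that an $SS_b(2k)$ word has $b+1$ freely choosable generating vertices; in the converse it simply observes that acyclicity forces all $b+1$ generating vertices to be free, and then invokes the ``only if'' part of Lemma~\ref{S-words} to conclude the word is special symmetric. In other words, the paper never verifies Definition~\ref{ss(2k)} directly---it uses the circuit-count characterisation of $SS(2k)$ as a black box.

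You instead identify the K\"onig incidence graph of $H(\pi,\tau)$ with the bipartite graph $B$ of distinct $(\text{even value},\text{odd value})$ pairs, and reduce both directions to the single combinatorial fact that $B$ is connected with $b+1$ vertices and $b$ edges, hence a tree. The forward direction is then immediate, and for the converse you verify conditions (i) and (ii) of Definition~\ref{ss(2k)} by analysing closed walks on a tree: consecutive traversals of an edge alternate direction (giving parity alternation of letter positions), closed subwalks traverse every other edge equally in each direction (giving condition (ii)), and the last new edge is necessarily a leaf edge (giving condition (i)). This is more self-contained and makes the combinatorial reason for the $SS$/acyclic correspondence transparent, at the cost of a longer argument; the paper's route is shorter but opaque unless one internalises Lemma~\ref{S-words}.

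Two points worth tightening in your write-up. First, the identification ``K\"onig graph of $H(\pi,\tau)$ equals simple $B$'' deserves one line of justification using the paper's definition of the hypergraph (the underlying graph $G$ is the $k$-cycle with vertex $m$ carrying $c(2m-2)$ and edge $m$ carrying $c(2m-1)$; then $V\in E_W$ iff some $m\in W$ has $m\in V$ or $m{+}1\in V$, which matches your two edge types). Second, your claim that the last new edge is a leaf is correct but not stated: before its first traversal the walk has not crossed it, so the far component of the tree has no previously visited edges, hence no edges at all. Once these are made explicit your argument is complete.
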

\begin{proof}
Suppose $\boldsymbol{\omega} \in SS_b(2k)$ with $(r+1)$ even generating vertices. Then using arguments similar to Lemma \ref{S-words} we find that there are $(r+1)$ even generating vertices and $(b-r)$ odd generating vertices. Suppose the $b$ distinct letters appear in the $i_1,i_2,\ldots,i_b$ positions for the first time in $\boldsymbol{\omega}$, i.e., the first appearance of the $j$th letter is at $(\pi(i_j-1),\pi(i_j))$ or $(\pi(i_j),\pi(i_j-1))$ (depending on whether $i_j$ is odd or even). Suppose the even generating vertices of $\boldsymbol{\omega}$ are $\pi(i_{t_0}),\pi(i_{t_1}),\ldots,\pi(i_{t)r})$ where $\pi(i_{t_0})=\pi(0)$ and the odd generating vertices are $\pi(i_{m_1}),\pi(i_{m_2}),\ldots ,\pi(i_{m_{b-r}})$ where $\pi(i_{m_1})=\pi(1)$. Let $V_j= \{\pi(2i): \pi(2i)=\pi(i_{t_j}), 1 \leq i \leq k\}$, $0 \leq j \leq r$ and $W_j= \{\pi(2i-1): \pi(2i-1)=\pi(i_{m_j}), 1 \leq i \leq k\}$, $1 \leq j \leq (b-r)$. Clearly, $\sigma=\{V_j; 0 \leq j \leq r\}$ and $\tau=\{W_j; 1 \leq j \leq (b-r)\}$ are two partitions of $\{1,2,\ldots,k\}$. Therefore, we can construct a hypergraph $H(\sigma,\tau)$ where $\sigma$ is the vertex set and $\{E_W;W\in \tau\}$ is the edge set (see \ref{hypergraph-ss2k}). Now suppose that this hypergraph has a cycle. That means by construction, there exists $a,b (a\neq b)\in \{1,2,\ldots, (b-r)\}$ and $q,l (q\neq l)\in \{0,1, \ldots,r\}$ such that $V_q, V_l \in W_a \cap W_b$. That is, there are edges $(\pi(k_1-1),\pi(k_1)),(\pi(k_2-1),\pi(k_2)),(\pi(k_3-1),\pi(k_3)),(\pi(k_4-1),\pi(k_4))$ with $k_i,1\leq i \leq 4$ odd such that $\pi(k_1-1) \in V_q, \pi(k_1)\in W_a$, $\pi(k_2-1) \in V_q, \pi(k_2)\in W_b$, $\pi(k_3-1) \in V_l, \pi(k_3)\in W_a$ and $\pi(k_4-1) \in V_l, \pi(k_4)\in W_b$. As the positions $(\pi(k_i-1),\pi(k_i),i=1,2,3,4$ are all distinct, there are four distinct letters that appear these four positions in $\boldsymbol{\omega}$. Now without loss of generality suppose, from left to right $(\pi(k_4-1),\pi(k_4))$ is the rightmost (among the four positions mentioned above) in $\boldsymbol{\omega}$. Then we see that as $\pi(k_4-1)\in V_l$ and $\pi(t_l)$ comes before $\pi(k_4-1)$, it cannot be chosen freely. Using a similar argument we can see that $\pi(k_4)$ also cannot be chosen freely. Also they have been chosen as generating vertices of three different letters that have appeared in the positions $(\pi(k_i-1),\pi(k_i)), 1 \leq i \leq 3$. Using Lemma \ref{S-words}, this is not possible as the letter at $(\pi(k_4-1),\pi(k_4))$ is different from the previous three letters. Thus the hypergraph $H(\sigma,\tau)$ does not have a cycle. Also it is evident by the construction that this hypergraph is unique, i.e., for every special symmetric word we get a unique hypergraph without cycles.\\

\noindent Conversely, suppose $H(\sigma,\tau)$ is a hypergraph with no cycle and $|\sigma|+|\tau|=b+1$. We form a word of length $2k$ from it in the following manner. Now $\sigma ,\tau \in \mathcal{P}(k)$. Let $\sigma=\{V_0,V_1,\ldots,V_r\}$ and $\tau=\{W_1,\ldots,W_{b-r}\}$ (as $|\sigma|+|\tau|=b+1$). Then we choose the even vertices $\pi(2i),0 \leq i\leq k-1$ from $\sigma$ and odd vertices $\pi(2i-1), 1 \leq i \leq k$ from $\tau$ and $\pi(i)=\pi(j)$ if $i$ and $j$ belong to the same block of $\sigma$ or $\tau$ (depending on $i$ and $j$ both being even or odd respectively). Thus we get a word $\boldsymbol{\omega}$ of length $2k$ whose even and odd generating vertices are $\{\pi(\min \{V_s\})\}_{0 \leq s \leq r}$ and $\{\pi(\min \{W_t\})\}_{1 \leq t \leq (b-r)}$ respectively. Thus there are $b$ distinct letters in $\boldsymbol{\omega}$. Now as $H(\sigma,\tau)$ does not have a cycle, using the same arguments as the previous paragraph, it can be shown that all the generating vertices can be chosen freely. This can happen only of the word is special symmetric. Thus we obtain $\boldsymbol{\omega} \in SS_b(2k)$ with $(r+1)$ even generating vertices. It is easy to see that two hypergaphs with no cycle cannot give rise to the same special symmetric word. 

Hence there is a one-one correspondence between special symmetric words and hypergraphs with no cycle. This completes the proof of this lemma.
\end{proof}

Thus we see that \eqref{Zak-limit} can be written as 
\begin{align}
\beta_{k}(\mu)& =\displaystyle \sum_{r=0}^{k-1}\sum_{\underset{\underset{\text{ even generating vertices}}{\text{having }(r+1)}}{\pi \in SS(2k)}} y^r C_{\pi} \nonumber \\
& = \displaystyle \sum_{r=0}^{k-1} \sum_{\underset{\text{with }r+1 \text{ blocks}}{\sigma\in \mathcal{P}(k)}} \sum_{\underset{H(\sigma,\tau) \text{ has no cycle}}{\tau \in \mathcal{P}(k)}} \prod_{i=1}^{b-r} y^{\frac{r}{b-r}} f(W_i)
\end{align} 
where $W_i$ are the blocks of $\tau$ and $f$ is some function determined by $(C_{2k})_{k\geq 1}$ (and not necessarily multiplicative in the sense of partitions.)

Thus using this and Remark \ref{unbounded support}, we conclude that our result generalises Theorem 3.2 in \citep{Benaych-Georges2012} and are the same when the entries satisfy \eqref{ck}.\\

\vskip3pt

In Proposition 3.1 in \citep{noiry2018spectral}, the author describes the limiting moments of the distribution via equivalence class of words (different from our notionof words) which we call \textit{Noiry words}. Let us first recall the description of Noiry words from Section 3 in \citep{noiry2018spectral}.\\
 
 \vskip3pt

\noindent \textbf{Noiry words}: Suppose $G=(V,E)$ is a labeled graph with labeling of the vertices. A word of length $k\geq 1$ on $G$ is a sequence of labels $i_1,i_2,\ldots,i_k$ such that for each $j\in \{1,2,\ldots,k-1\}$, $\{i_j,i_{j+1}\}$ is a pair of adjacent labels, i.e., the associated vetrices are neighbours in $G$. A word of length $k$ is closed if $i_1=i_k$. Such closed words will be called Noiry words.\\

\noindent \textbf{Equivalence of Noiry words}: Let $\boldsymbol{i}=i_1,i_2,\ldots,i_k$ and $\boldsymbol{i^{\prime}}=i_1^{\prime},i_2^{\prime},\ldots,i_k^{\prime}$ are two Noiry words on two labeled graphs $G$ and $G^{\prime}$. These words are said to be equivalent if there is a bijection $\sigma$ of $\{1,2,\ldots,|V|\}$ such that $\sigma(i_j)=i_j^{\prime},1 \leq j \leq k$. This defines an equivalence relation on the set of all Noiry words, thereby giving rise to equivalence classes of Noiry words. $\mathbf{W}_k(a,a+1,l,\boldsymbol{b}), \boldsymbol{b}=(b_1,b_2,\ldots,b_a)\in \mathbb{N}^a, b_i\geq 2, \displaystyle \sum_{i=1}^a b_i=2k$,( see Section 3 and equation 3.2 in \citep{noiry2018spectral}) is such an equiavlence class of Noiry words on a labeled rooted planar tree with $a$ edges, of which $l$ are odd and each edge is browsed $b_i$ times, $1 \leq i \leq a$. In the next lemma we show how each of these equivalence classes of words correspond to special symmetric words.

\begin{lemma}
Each equivalence class $\mathbf{W}_k(a,a+1,l,\boldsymbol{b}),\boldsymbol{b}=(b_1,b_2,\ldots,b_a)\in \mathbb{N}^a, b_i\geq 2, \displaystyle \sum_{i=1}^a b_i=2k$ is a word $\boldsymbol{\omega} \in SS_a(2k)$ with $l$ odd generating vertices and each letter appearing $b_i, 1\leq i \leq a$ times in $\boldsymbol{\omega}$. 
\end{lemma}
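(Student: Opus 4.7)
The plan is to construct an explicit map from the equivalence class $\mathbf{W}_k(a,a+1,l,\boldsymbol{b})$ to the set of words $\boldsymbol{\omega}\in SS_a(2k)$ having $l$ odd generating vertices and letter-multiplicities $(b_1,\ldots,b_a)$. Fix any representative Noiry word $i_1 i_2 \cdots i_{2k+1}$ in the class, viewed as a closed walk on the associated labeled rooted planar tree $T$ with $a$ edges and $a+1$ vertices. Label each edge of $T$ with a distinct letter, using the order in which that edge is first traversed by the walk, and define $\boldsymbol{\omega}[j]$ to be the letter attached to the edge used between $i_j$ and $i_{j+1}$. Noiry-equivalence is a relabeling of vertices by a bijection, so it preserves both the sequence of edges visited and the order of their first appearance; the construction therefore descends to the equivalence class. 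By the hypothesis that the $i$th edge is browsed $b_i$ times, the $i$th letter of $\boldsymbol{\omega}$ appears exactly $b_i$ times, giving $a$ distinct letters and total length $2k$.

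To show $\boldsymbol{\omega}\in SS_a(2k)$, I would exploit the excursion structure of closed walks on a tree: each time the walk crosses an edge $e$ outward, it must eventually cross $e$ inward before terminating, and every intermediate step remains in the subtree on the far side of $e$. This forces every letter to appear an even number of times and shows that between any two consecutive occurrences of a letter $e$, each other letter either does not appear at all or appears in contiguous pure even blocks corresponding to excursions into adjacent subtrees. Because the parity of positions alternates along the walk, every such excursion contributes equally many odd and even positions to any other letter, yielding condition (ii) of Definition \ref{ss(2k)}; condition (i), concerning the last block, follows from the same excursion argument applied to the final pure block of the first-appearing letter.

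For the parity count, the $a+1$ generating vertices of $\boldsymbol{\omega}$ correspond bijectively to the $a+1$ vertices of $T$: the root corresponds to $\pi(0)$, and every other tree-vertex is discovered exactly at the step when the unique edge leading to it is first traversed, which is the step that introduces the corresponding generating vertex of $\boldsymbol{\omega}$. Whether that discovery step has odd or even index is determined by the parity of the depth of the discovered vertex, and this is precisely how Noiry classifies the associated edge as odd or even. Consequently the class contributes exactly $l$ odd and $a+1-l$ even generating vertices. Finally, injectivity of the map on equivalence classes follows because the word $\boldsymbol{\omega}$ records the combinatorial walk on $T$ up to relabeling of vertices.

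The main obstacle I anticipate is the last bookkeeping step, namely lining up Noiry's parity convention on tree edges with the parity of the positions of the generating vertices in $\boldsymbol{\omega}$, since the two are defined in different frameworks (rooted planar trees versus circuits into the $S$-link function). The tree-excursion verification of conditions (i) and (ii) in Definition \ref{ss(2k)} is conceptually transparent but still requires careful separate handling of each clause, and it is essential for this argument that the tree is planar and rooted so that the order of first appearances is canonical within the equivalence class.
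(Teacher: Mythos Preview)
Your plan is correct but takes a genuinely different route from the paper. The paper's proof is much shorter: it first observes that an equivalence class of Noiry words is exactly a word in the sense of Section~\ref{preliminaries} (both are equivalence classes of labeled closed walks under vertex relabeling), and then invokes Lemma~\ref{S-words}, which says that the only words with $a$ distinct letters for which all $a+1$ generating vertices can be chosen freely are those in $SS_a(2k)$. Since a class in $\mathbf{W}_k(a,a+1,l,\boldsymbol{b})$ arises from a tree with $a+1$ vertices, all $a+1$ generating vertices are automatically free, and the conclusion follows in one line. Your approach instead constructs the word explicitly from the closed walk and verifies the two defining conditions of $SS(2k)$ directly via tree-excursion combinatorics. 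This is more self-contained and makes the tree--word bijection explicit, but it redoes combinatorial work already packaged into Lemma~\ref{S-words}; the paper's route is shorter precisely because that lemma has absorbed the hard counting. Your concern about matching Noiry's edge-parity convention with the parity of generating-vertex positions is legitimate, but note that the paper's own proof is equally terse on this point: both arguments ultimately rely on the observation that, since the walk alternates parity at every step and the tree is bipartite by depth, a vertex at depth $d$ is always discovered at a position of parity $d\bmod 2$.
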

\begin{proof}
Recall from Section \ref{preliminaries} that we have defined words to be equivalence classes of circuits with the relation arising from the link functions (see \eqref{S-link}). Now Noiry words are not equivalence classes to begin with, they form equivalence classes if they are relabeled in a certain way as described above. From this and how we have defined equivalence of circuits, observe that an equivalence class of Noiry words are nothing but a word in our case.  Now the only words  with $a$ distinct letters for which $a+1$ generating vertices can be chosen freely are the special symmetric words with $a$ distinct letters (see Lemma \ref{S-words}). Thus $\mathbf{W}_k(a,a+1,l,\boldsymbol{b}),\boldsymbol{b}=(b_1,b_2,\ldots,b_a)\in \mathbb{N}^a, b_i\geq 2, \displaystyle \sum_{i=1}^a b_i=2k$ is a word $\boldsymbol{\omega} \in SS_a(2k)$ with $l$ odd generating vertices and each letter appearing $b_i, 1\leq i \leq a$ times in $\boldsymbol{\omega}$.
\end{proof}

Using this lemma it readily follows that 
\begin{align*}
\displaystyle\sum_{a=1}^k \sum_{l=1}^a \sum_{\underset{b_i\geq 2,b_1+\cdots+ b_{a}=2k}{\boldsymbol{b}=(b_1,b_2,\ldots,b_a)}}|\mathbf{W}_k(a,a+1,l,\boldsymbol{b})|  =  \displaystyle \sum_{a=1}^k \sum_{l=1}^{a}\sum_{\underset{\underset{\text{ with block sizes } b_1,\ldots,b_a }{\text{having }l \text{ odd generating vertices}}}{\pi \in SS_a(2k)}} 1 =  \displaystyle \sum_{l=1}^k \sum_{\underset{\text{having }l \text{ odd generating vertices}}{\pi \in SS(2k)}} 1.
\end{align*}
Hence it is easy to see \eqref{Zak-limit} is same as the expression of moments obtained in equation (3.2) in \citep{noiry2018spectral}.\\

\section{Appendix}\label{appendix}
\begin{lemma}
Suppose $\{x_{ij};1\leq i \leq p, 1 \leq j \leq j\leq n\}$ are independent variables that satisfy Assumption A and $y_{ij}= x_{ij}\boldsymbol{1}_{[|x_{ij}|\leq t_n]}$. Then 
\begin{itemize}
\item[(i)] $\displaystyle\frac{1}{p} \sum_{i,j} (y_{ij}^2-\mathbb{E}[y_{ij}^2]) \rightarrow 0  \ \ \text{ almost surely as } p \rightarrow \infty. $
\item[(ii)] Additionally if, $\frac{1}{p} \displaystyle \sum_{i,j} x_{ij}^2\boldsymbol{1}_{[|x_{ij}|> t_n]} \rightarrow 0$ almost surely (or in probability), $\limsup_p \displaystyle\frac{1}{p} \sum_{i,j} x_{ij}^2<\infty$ almost surely (or in probability).  
\end{itemize}

\end{lemma}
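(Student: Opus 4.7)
The plan is to prove (i) by a fourth-moment computation followed by Borel-Cantelli, and then deduce (ii) by splitting the sum at the truncation level $t_n$ and invoking (i).

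For (i), I would set $Z_{ij} = y_{ij}^2 - \mathbb{E}[y_{ij}^2]$, so the $Z_{ij}$ are independent with mean zero, and put $S_p = \sum_{i,j} Z_{ij}$. Expanding the fourth moment,
\[
\mathbb{E}[S_p^4] = \sum_{(i_1,j_1),\ldots,(i_4,j_4)} \mathbb{E}\Big[\prod_{k=1}^4 Z_{i_k j_k}\Big],
\]
and, by independence together with the vanishing mean, a term survives only if every index pair $(i_k,j_k)$ appears at least twice across the four factors. Thus only two index patterns contribute: all four pairs equal, giving $\sum_{i,j}\mathbb{E}[Z_{ij}^4]$; and two disjoint pairs, giving $3\sum \mathbb{E}[Z_{i_1j_1}^2]\,\mathbb{E}[Z_{i_2j_2}^2]$. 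Clause (i) of Assumption A at $k=1,2$ gives $n\mathbb{E}[y_{ij}^{2k}] = g_{2k,n}(i/p,j/n)$, which is uniformly bounded in $(i,j,n)$ by clause (ii), so $\mathbb{E}[Z_{ij}^{2m}] = \mathcal{O}(1/n)$ for $m=1,2$. Counting pairs, and using $p/n \to y \in (0,\infty)$, the first pattern contributes $\mathcal{O}(pn \cdot n^{-1}) = \mathcal{O}(p)$ and the second contributes $\mathcal{O}((pn)^2 \cdot n^{-2}) = \mathcal{O}(p^2)$, so $\mathbb{E}[S_p^4] = \mathcal{O}(p^2)$, i.e. $\mathbb{E}[(S_p/p)^4] = \mathcal{O}(p^{-2})$. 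Summability together with Markov's inequality and the Borel-Cantelli lemma then gives $S_p/p \to 0$ almost surely.

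For (ii), I would decompose
\[
\frac{1}{p}\sum_{i,j} x_{ij}^2 \;=\; \frac{1}{p}\sum_{i,j} y_{ij}^2 \;+\; \frac{1}{p}\sum_{i,j} x_{ij}^2\,\boldsymbol{1}_{[|x_{ij}|>t_n]}.
\]
The second term tends to zero almost surely (respectively in probability) by the standing hypothesis of (ii). For the first term, part (i) reduces the task to bounding the deterministic quantity $\frac{1}{p}\sum_{i,j}\mathbb{E}[y_{ij}^2] = \frac{1}{pn}\sum_{i,j} g_{2,n}(i/p,j/n)$. Uniform convergence of $g_{2,n}$ to the bounded Riemann integrable function $g_2$ on $[0,1]^2$ identifies this as a Riemann sum with finite limit $\int_{[0,1]^2} g_2(x,y)\,dx\,dy$, yielding the desired bounded $\limsup$.

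I do not anticipate a genuine obstacle here; the only point demanding mild care is the uniform-in-$(i,j)$ moment estimate $\mathbb{E}[y_{ij}^{4}] = \mathcal{O}(1/n)$, which follows immediately from Assumption A(i) at $k=2$ together with the uniform bound on $\|g_{4,n}\|$ furnished by A(ii). The argument for (ii) adapts from the almost sure to the in-probability case verbatim, since both the hypothesis and the conclusion of (i) persist under that weaker mode of convergence.
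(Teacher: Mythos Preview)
Your proposal is correct and follows essentially the same route as the paper: a fourth-moment expansion of $\sum_{i,j}(y_{ij}^2-\mathbb{E}[y_{ij}^2])$, the observation that only the ``all four equal'' and ``two pairs'' patterns survive by independence and mean zero, the uniform bound $\mathbb{E}[y_{ij}^{2m}]=\mathcal{O}(1/n)$ from Assumption~A, and then Markov plus Borel--Cantelli; part (ii) is likewise handled in the paper by the identical truncation split $x_{ij}^2 = y_{ij}^2 + x_{ij}^2\boldsymbol{1}_{[|x_{ij}|>t_n]}$ combined with (i) and the Riemann-sum limit $\frac{1}{pn}\sum g_{2,n}(i/p,j/n)\to\int g_2$.
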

\begin{proof}
\begin{itemize}
\item[(i)] Let $\epsilon>0$ be fixed. Then 
\begin{align*}
\mathbb{P}\bigg[\big|\frac{1}{p}\displaystyle \sum_{i,j} (y_{ij}^2-\mathbb{E}[y_{ij}]^2)\big|>\epsilon\bigg]& \leq \frac{1}{\epsilon^4p^4} \mathbb{E}\bigg[ \big(\displaystyle \sum_{i,j} y_{ij}^2-\mathbb{E}[y_{ij}^2])\big)^4\bigg]\\
& = \frac{1}{\epsilon^4p^4} \mathbb{E}\bigg[ \displaystyle \sum_{\underset{j_1,j_2,j_3,j_4}{i_1,i_2,i_3,i_4}} \prod_{l=1}^4\big(y_{i_lj_l}^2 - \mathbb{E}[y_{i_lj_l}^2])\big)^4\bigg].
\end{align*}
As $y_{ij}$s are independent, the above inequality becomes
\begin{align*}
\mathbb{P}\bigg[\big|\frac{1}{p}\displaystyle \sum_{i,j} (y_{ij}^2-\mathbb{E}[y_{ij}]^2)\big|>\epsilon\bigg]& \leq \frac{1}{\epsilon^4 p^4} \sum_{ij} \mathbb{E}\big[(y_{ij}^2- \mathbb{E}[y_{ij}^2])^4\big] +\\ & \  6 \frac{1}{\epsilon^4 p^4} \sum_{\underset{j_1,j_2}{i_1,i_2}} \mathbb{E}\big[(y_{i_1j_1}^2- \mathbb{E}[y_{i_1j_1}^2])^2 (y_{i_2j_2}^2- \mathbb{E}[y_{i_2j_2}^2])^2\big].
\end{align*}
Now from \eqref{gkeven}, as $\{g_{2k,n}\}$ are bounded integrable, the first term in the rhs of the above inequality is $\mathcal{O}(\frac{1}{p^3})$ and the second term is $\mathcal{O}(\frac{1}{p^2})$. Therefore, 
\begin{align*}
\displaystyle \sum_p \mathbb{P}\bigg[\big|\frac{1}{p}\displaystyle \sum_{i,j} (y_{ij}^2-\mathbb{E}[y_{ij}]^2)\big|>\epsilon\bigg]< \infty.
\end{align*}
Hence by Borel-Cantelli lemma, $\displaystyle\frac{1}{p} \sum_{i,j} (y_{ij}^2-\mathbb{E}[y_{ij}^2]) \rightarrow 0 \ \ \ \text{ almost surely as } p \rightarrow \infty.$
\end{itemize}

\item[(ii)] Observe that $\displaystyle \sum_{i,j} x_{ij}^2= \sum_{i,j} \big(y_{ij}^2 +x_{ij}^2\boldsymbol{1}_{[|x_{ij}|> t_n]}\big)$. Then by the condition $\frac{1}{p}\displaystyle \sum_{i,j} x_{ij}^2\boldsymbol{1}_{[|x_{ij}|> t_n]} \rightarrow 0$ almost surely (or in probability) and (i), (ii) holds true. 
\end{proof}

%

Here are some simulations of the Sample covariance matrices with variance profiles.

\begin{figure}[htp]

 \centering
\includegraphics[width=.7\linewidth]{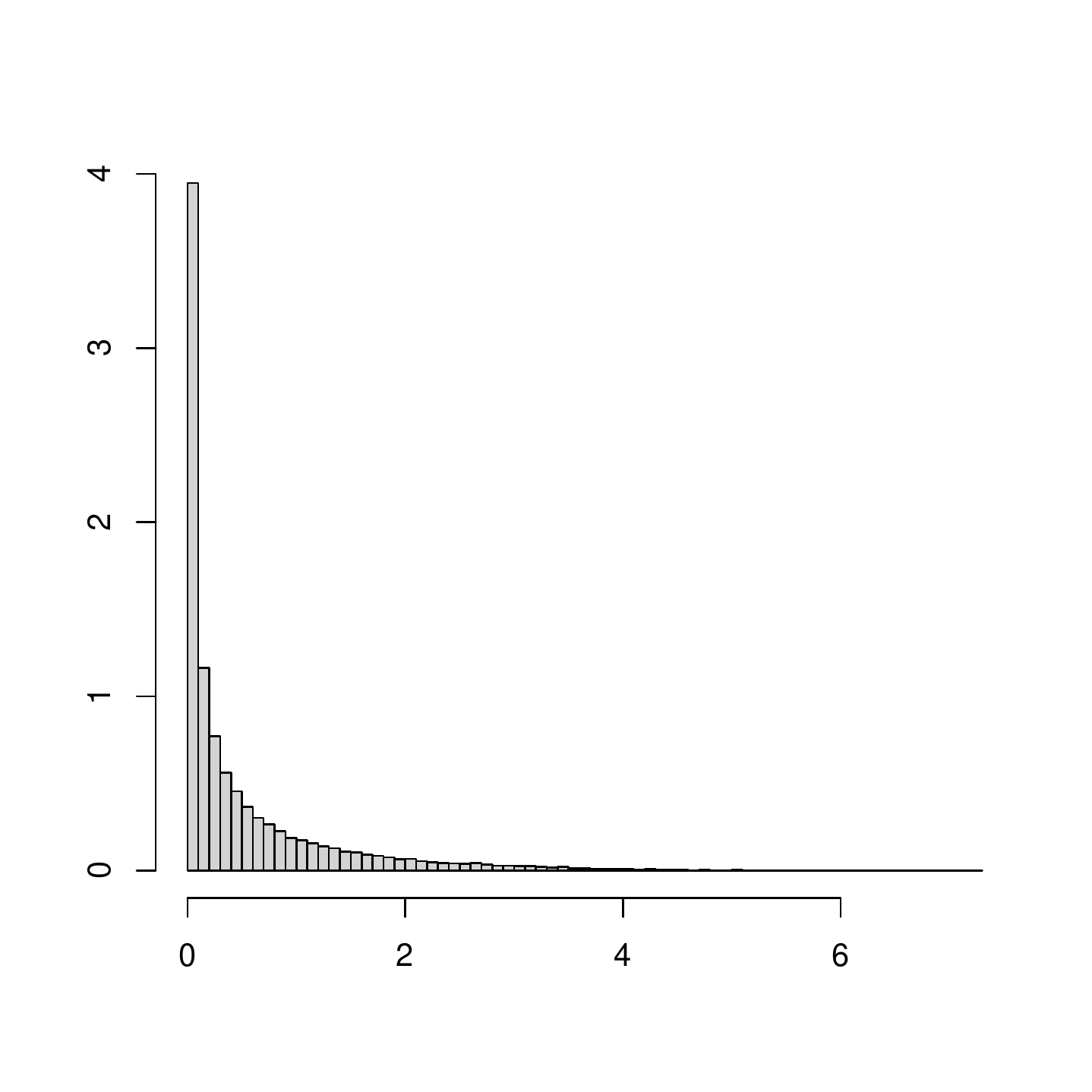}  
  \caption{Input is i.i.d $x_{ij}=\frac{(i+j)^2}{2n^2}Ber(3/n)$ for every $n$.}
  \end{figure}
\begin{figure}
\centering
  \includegraphics[width=.7\linewidth]{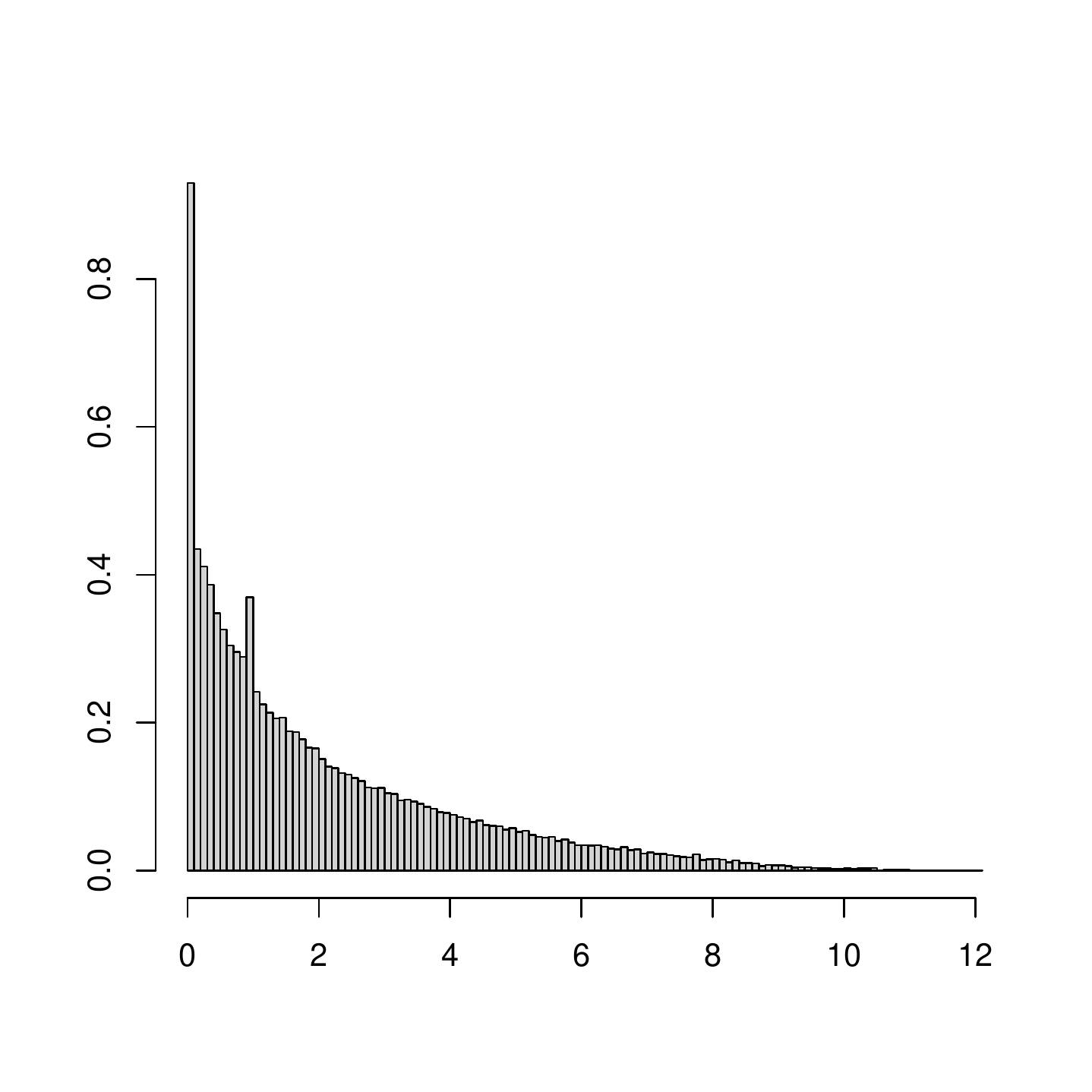}  
  \caption{Input is i.i.d $x_{ij}=\sin(\frac{\pi(i+j)}{2n})Ber(3/n)$ for every $n$.}
  \caption*{Histogram of the eigenvalues of $X_pX_p^T$ for $p=500,n=1000, 30$ replications.}
\end{figure}

\providecommand{\bysame}{\leavevmode\hbox to3em{\hrulefill}\thinspace}
\providecommand{\MR}{\relax\ifhmode\unskip\space\fi MR }
\providecommand{\MRhref}[2]{%
  \href{http://www.ams.org/mathscinet-getitem?mr=#1}{#2}
}
\providecommand{\href}[2]{#2}

\bibliographystyle{plainnat}
\bibliography{mybibfilefinal}

\end{document}